\documentclass{amsart}
\usepackage{tikz}
\usepackage{xcolor}
\usepackage{amssymb,latexsym,amsmath,extarrows}
\usepackage{graphicx,mathrsfs,comment}
\usepackage{hyperref,url}
\usepackage{pict2e}

\usepackage{amstext}
\usepackage{bbm}

\numberwithin{equation}{section}

\usepackage{esint}

\newtheorem{theorem}{Theorem}[section]
\newtheorem{lemma}[theorem]{Lemma}

\newtheorem{remark}[theorem]{Remark}
\newtheorem*{remark*}{Remark}

\newtheorem{definition}[theorem]{Definition}
\newtheorem{corollary}[theorem]{Corollary}

\newtheorem{conjecture}[theorem]{Conjecture}

\makeatletter
\newcommand{\barredsum}{%
  \DOTSB\mathop{\mathpalette\@barredsum\relax}\slimits@
}
\newcommand{\@barredsum}[2]{%
  \begingroup
  \sbox\z@{$#1\sum$}%
  \setlength{\unitlength}{\dimexpr2pt+\ht\z@+\dp\z@\relax}%
  \@barredsumthickness{#1}%
  \vphantom{\@barredsumbar}%
  \ooalign{$\m@th#1\sum$\cr\hidewidth$#1\@barredsumbar$\hidewidth\cr}%
  \endgroup
}
\newcommand{\@barredsumbar}{%
  \vcenter{\hbox{\begin{picture}(0,1)\roundcap\Line(0,0)(0,1)\end{picture}}}%
}
\newcommand{\@barredsumthickness}[1]{
  \linethickness{%
    1.25\fontdimen8
      \ifx#1\displaystyle\textfont\else
      \ifx#1\textstyle\textfont\else
      \ifx#1\scriptstyle\scriptfont\else
      \scriptscriptfont\fi\fi\fi 3
  }%
}
\makeatother

\newcommand{\al}{\alpha}
\newcommand{\be}{\beta}
\newcommand{\ga}{\gamma}
\newcommand{\Ga}{\Gamma}
\newcommand{\de}{\delta}

\newcommand{\e}{\varepsilon}

\newcommand{\la}{\lambda}

\newcommand{\si}{\sigma}

\newcommand{\vp}{\varphi}


\newcommand{\cq}{\mathcal Q}

\newcommand{\cj}{\mathcal J}

\newcommand{\cl}{\mathcal L}


\newcommand{\wh}{\widehat}

\newcommand{\ZR}{\mathbb{R}}
\newcommand{\ZT}{\mathbb{T}}

\newcommand{\Id}{{\bf{1}}}

\newcommand{\cB}{{\mathcal B}}
\newcommand{\cT}{{\mathcal T}}

\newcommand{\R}{\mathbb{R}}

\newcommand{\T}{\mathbb{T}  }

\newcommand{\ZW}{\mathbb W}

\newcommand{\dist}{{\rm dist}}

\def\l{\ell}

\newcommand{\supp}{{\rm supp}}

\newcommand{\Yn}{Y_{\textup{narrow}}}
\newcommand{\Yb}{Y_{\textup{broad}}}

\begin{document}

\title[Weighted decoupling and maximal Bochner-Riesz]{A weighted decoupling inequality and its application to the maximal Bochner-Riesz problem}

\date{}

\author{Shengwen Gan} \address{ Shengwen Gan\\  Department of Mathematics\\ University of Wisconsin-Madison, USA} \email{sgan7@wisc.edu}

\author{Shukun Wu} \address{ Shukun Wu\\  Department of Mathematics\\ Indiana University Bloomington, USA} \email{shukwu@iu.edu}

\maketitle

\begin{abstract}
We prove some weighted $L^p\l^p$-decoupling estimates when $p=2n/(n-1)$. As an application, we give a result beyond the real interpolation exponents for the maximal Bochner-Riesz operator in $\ZR^3$. We also make an improvement in the planar case.
\end{abstract}

\section{Introduction}

Let $S$ be a compact $C^2$ hypersurface in $\R^n$ with positive second fundamental form. For simplicity, let us assume that $S$ has the following expression:
\[ \{ (\bar x,\psi(\bar x)):\bar x\in[0,1]^{n-1} \}, \]
where $\psi$ is a $C^2$ bounded function and $D^2\psi(\bar x)$ is positive definite for every $x'\in[0,1]^{n-1}$. A typical example is $\psi(\bar x)=|\bar x|^2$, in which $S$ is a truncated paraboloid.

For $R\ge 1$, denote by $N_{R^{-1}}(S)$ the $R^{-1}$-neighborhood of $S$. We partition $N_{R^{-1}}(S)$ into rectangular parallelepipeds $\Theta=\{\theta\}$
of dimensions $R^{-1}\times R^{-1/2}\times\cdots\times R^{-1/2}$, each of which is called a $R^{-1/2}$-cap. For any function $f$ in $\R^n$, denote by $f_\theta$ the Fourier restriction of $f$ onto $\theta$, that is, $f_\theta=(\Id_\theta \wh f)^\vee$. A celebrated decoupling theorem by Bourgain and Demeter \cite{Bourgain-Demeter} states that 
\begin{equation}
\label{BD-decoupling}
    \|f\|_{L^p(B_R)}^p\leq C_{\e} R^{\e} R^{\frac{n-1}{2}\cdot\frac{p-2}{2}}\sum_{\theta}\|f_\theta\|_p^p
\end{equation}
for $2\leq p\leq 2(n+1)/(n-1)$ and any function $f$ such that $\supp \wh f\subset N_{R^{-1}}(S)$.

It is natural to replace the integration domain $B_R$ on the left-hand side with a subset $Y\subset B_R$, and ask for a refined estimate for $\|f\|_{L^p(Y)}^p$.
This kind of estimate appeared in many references and has applications to other problems. For example, in \cite[Theorem 1.4]{Du-Guth-Li} and \cite[Theorem 1.6]{Du-Zhang}, the set $Y$ is chosen to be a union of unit balls in $B_R$ that resembles an $(n-1)$-dimensional set, and it has an application to the maximal Schr\"odinger problem. Another example is in \cite[Theorem 4.2]{GIOW}, where $Y$ is a union of $R^{1/2}$-balls in $B_R$ that have controlled number of intersections with wave packets of $f$. This type of estimate has application to the Falconer distance problem. See also the recent work in \cite{du2023weighted}.

\smallskip

In this paper, we will study another variant of the decoupling inequality for $\|f\|_{L^p(Y)}^p$, and apply it to the maximal Bochner-Riesz problem. The nature of the Bochner-Riesz problem needs an estimate for $\|f\|_{L^p(Y)}^p$ with $Y$ being an arbitrary subset of $B_R$. We will show that if $p$ is strictly smaller than $2(n+1)/(n-1)$, the decoupling endpoint, then whenever $Y$ is a small subset of $B_R$, there is a refinement of the classical decoupling estimate \eqref{BD-decoupling}. 

Specifically, let $p_n=\frac{2n}{n-1}$ be the (multilinear) restriction endpoint. If plugging in $p=p_n$ to \eqref{BD-decoupling}, we obtain
\begin{equation}\label{BDpn}
    \|f\|_{L^{p_n}(B_R)}^{p_n}\leq C_\e R^{\e} R^{1/2}\sum_{\theta}\|f_\theta\|_{p_n}^{p_n}.
\end{equation}
If $B_R$ is replaced by a subset $Y$, we expect some stronger decoupling inequality than \eqref{BDpn}. More concretely, we hope that for some $\al_n>0$, 
\begin{equation}\label{goal}
    \|f\|_{L^{p_n}(Y)}^{p_n}\leq C_\e R^{\e}  \Big(\frac{|Y|}{R^n}\Big)^{p_n\al_n}R^{1/2}\sum_{\theta}\|f_\theta\|_{p_n}^{p_n}.
\end{equation}
It is clear that when $\al_n$ is bigger, \eqref{goal} is stronger. When $\al_n=0$, it is just \eqref{BDpn}.


The goal is to find the biggest $\al_n$ so that for any $Y\subset B_R$, \eqref{goal} holds. Since $f$ is locally constant at scale $1$ ($\wh f$ is supported on in the unit ball), we just need to study the case when $Y$ is a disjoint union of unit balls. Our result is the following.

\begin{theorem}
\label{main}
Suppose $Y$ is a union of unit balls in $B_R$. Denote $\al_n=\frac{1}{2n(3n-1)}$ when $n\geq3$ and $\al_n=\frac{n-1}{4n^2}=\frac{1}{16}$ when $n=2$. Also denote $p_n=\frac{2n}{n-1}$. Then for any function $f$ such that $\supp\wh f\subset N_{R^{-1}}(S)$, we have 
\begin{equation}
\label{goal-n-unitball}
    \|f\|_{L^{p_n}(Y)}^{p_n}\leq C_\e R^{\e}  \Big(\frac{|Y|}{R^n}\Big)^{p_n\al_n}R^{1/2}\sum_{\theta}\|f_\theta\|_{p_n}^{p_n}.
\end{equation}

\end{theorem}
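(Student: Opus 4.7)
The plan is to combine a broad-narrow decomposition with the Bennett--Carbery--Tao multilinear restriction inequality at the endpoint $p_n=2n/(n-1)$, together with an induction on scale to handle the narrow contribution.

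First, I would perform a wave packet decomposition at scale $R^{1/2}$, writing each $f_\theta$ as a sum over tubes of dimensions $R^{1/2}\times\cdots\times R^{1/2}\times R$. After standard pigeonholing I may assume all significant wave packets have comparable amplitudes; after further refining $Y$ if necessary, each unit ball $B\subset Y$ is met by a comparable number of wave packets, and the significant caps $\theta$ all have $\|f_\theta\|_{p_n}$ of comparable size. At this stage the target inequality \eqref{goal-n-unitball} reduces to bounding $\sum_{B\subset Y}\|f\|_{L^{p_n}(B)}^{p_n}$ in terms of the pigeonholed amplitude times $\#\{B\subset Y\}$, and the weight $(|Y|/R^n)^{p_n\alpha_n}$ must be manufactured from the smallness of this count compared to $R^n$.

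Next, on each unit ball I would run a Bourgain--Guth broad-narrow dichotomy at some parameter $K$ to be tuned later. Either (broad) there are $n$ quantitatively transverse caps $\theta_1,\ldots,\theta_n$ on which $|f_{\theta_i}(x)|$ are all comparable to $|f(x)|$, or (narrow) all contributing caps lie in an $O(K^{-1})$-neighbourhood of a lower-dimensional subvariety. In the broad case I invoke multilinear restriction at $p_n$: because the left-hand side is localised to $Y$, the multilinear bound picks up a factor polynomial in $|Y|/R^n$; interpolating this gain against standard $L^2$ orthogonality on each $f_\theta$ produces the weight $(|Y|/R^n)^{p_n\alpha_n}$ that appears in \eqref{goal-n-unitball}. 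In the narrow case, the problem reduces to a decoupling on a manifold of dimension $n-1$ sitting in $\mathbb R^n$; I would iterate the broad-narrow procedure, or equivalently induct on the dimension (with $n=2$ as base case) and on the scale $R$, absorbing the narrow part into an $R^{\varepsilon}$ loss via the standard induction-on-scales trick.

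The principal obstacle is to identify the exponent $\alpha_n=\frac{1}{2n(3n-1)}$. This requires balancing several competing losses: the broad estimate gives a gain that degrades as $K$ shrinks; the narrow case loses a factor polynomial in $K$ per inductive step; and H\"older interpolation between $L^{p_n}(Y)$ and $L^2$-type orthogonality introduces an exponent determined by the volume ratio $|Y|/R^n$. Setting up the right parametrised recursion and locating the fixed point that simultaneously closes the induction is the delicate part, and the value $\frac{1}{2n(3n-1)}$ should emerge from precisely this bookkeeping. The planar case is expected to behave differently because ``narrow'' in $\mathbb R^2$ is essentially one-dimensional and does not need a recursive step, yielding the cleaner exponent $\alpha_2=\frac{1}{16}$.
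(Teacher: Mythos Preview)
Your overall architecture---wave packets, pigeonholing, broad--narrow at scale $K$, and induction on scales through parabolic rescaling for the narrow part---matches the paper's. But two points are genuinely missing.

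\textbf{The broad case needs more than multilinear restriction.} Applying BCT on $R^{1/2}$-balls and summing over those meeting $Y$ does give a bound involving $|Y|$, but it also introduces an overlap parameter $\nu$ (the number of wave packets through a typical $R^{1/2}$-ball), and after multilinear Kakeya one gets roughly $(|X|/R^n)^{1/n}\nu\sum_\theta\|F_\theta\|_{p_n}^{p_n}$. When $\nu$ is large this estimate alone is useless. The paper pairs it with a \emph{second} estimate coming from the refined decoupling theorem of Guth--Iosevich--Ou--Wang (Theorem~\ref{refined-decoupling-thm}), which produces a factor $\nu^{1/(n+1)}$ on the right and a compensating $|Y|^{(n-1)/(2n(n+1))}$ from H\"older; optimising between the two is what yields $\alpha=\frac{n-1}{4n^2}$ for the broad part. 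Your phrase ``interpolating against standard $L^2$ orthogonality'' does not supply this second ingredient, and without it the broad bound does not close for small $|Y|$ and large $\nu$.

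\textbf{The exponent $\tfrac{1}{2n(3n-1)}$ is not a broad/narrow balance.} For $n\ge 3$ the broad part already achieves $\frac{n-1}{4n^2}$, which is strictly larger than $\frac{1}{2n(3n-1)}$; the latter comes entirely from the narrow induction. The mechanism you omit is that lower-dimensional decoupling must be applied on $K^2$-balls, which forces $Y$ to be replaced by $N_{K^2}(Y)$ before rescaling. This inflates $|Y|$ by $K^{2n}$, and after the parabolic rescaling $R\mapsto R/K^2$ the factor $(|Y|/R^n)^{p_n\alpha_n}$ degrades by $K^{(3n-1)p_n\alpha_n}$; matching this against the decoupling loss $K^{(n-2)/(n-1)}$ and the saving $K^{-1}$ from $R^{1/2}\mapsto(R/K^2)^{1/2}$ forces $\alpha_n\le\frac{1}{2n(3n-1)}$. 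In $\ZR^2$ the paper detects narrowness pointwise rather than on $K^2$-balls, so no thickening is needed; the narrow constraint then relaxes to $\alpha_2\le 1/4$ and the broad exponent $1/16$ governs---so your intuition about $n=2$ is right, but for a reason you did not articulate.
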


\smallskip

Theorem \ref{main} immediately implies the following result by pigeonholing and noting that $p_n\alpha_n\le 1$.
\begin{corollary}
\label{main2}
Let $Y \subset B_R$. Denote $\al_n=\frac{1}{2n(3n-1)}$ when $n\geq3$ and $\al_n=\frac{n-1}{4n^2}=\frac{1}{16}$ when $n=2$, and denote $p_n=\frac{2n}{n-1}$. Then for any function $f$ such that $\supp\wh f\subset N_{R^{-1}}(S)$, we have 
\begin{equation}
    \|f\|_{L^{p_n}(Y)}^{p_n}\leq C_\e R^{\e}  \Big(\frac{|Y|}{R^n}\Big)^{p_n\al_n}R^{1/2}\sum_{\theta}\|f_\theta\|_{p_n}^{p_n}.
\end{equation}

\end{corollary}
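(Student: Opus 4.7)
The plan is to derive Corollary~\ref{main2} from Theorem~\ref{main} by a dyadic pigeonhole in the size of $|f|$ combined with the local constancy of $f$ at scale $1$, then using the inequality $p_n\al_n\le 1$ to trade the measure of an enlarged level set for $|Y|$. First I would truncate: since $\wh f$ is supported in $N_{R^{-1}}(S)$, standard bounds place $|f(x)|$ in a range of length $R^{O(1)}$ on all but a negligible portion of $B_R$, so only $O(\log R)$ dyadic levels $Y_k=\{x\in Y:|f(x)|\sim 2^k\}$ are relevant. Pigeonholing selects a single level $k_0$ with $\|f\|_{L^{p_n}(Y)}^{p_n}\lesssim (\log R)\,\|f\|_{L^{p_n}(Y_{k_0})}^{p_n}$, and on $Y_{k_0}$ the magnitude of $f$ is pinned at $2^{k_0}$ so that $\|f\|_{L^{p_n}(Y_{k_0})}^{p_n}\sim 2^{k_0 p_n}|Y_{k_0}|$.

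Next, because $\wh f$ is supported in a bounded region, $|f|$ is locally constant at scale $1$. I would take $\wt Y_{k_0}$ to be the union of all unit balls from a fixed lattice tiling of $B_R$ on which $|f|\sim 2^{k_0}$. Local constancy gives $Y_{k_0}\subset \wt Y_{k_0}$, and $|f|\sim 2^{k_0}$ throughout $\wt Y_{k_0}$, up to rapidly decaying tails that are absorbed into the $R^{\e}$ loss. Applying Theorem~\ref{main} to the union-of-unit-balls set $\wt Y_{k_0}$ then yields
\begin{equation*}
    2^{k_0 p_n}|\wt Y_{k_0}|\sim\|f\|_{L^{p_n}(\wt Y_{k_0})}^{p_n}\le C_\e R^{\e}\Big(\tfrac{|\wt Y_{k_0}|}{R^n}\Big)^{p_n\al_n}R^{1/2}\sum_\theta\|f_\theta\|_{p_n}^{p_n}.
\end{equation*}

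The last step is to convert $|\wt Y_{k_0}|$ into $|Y|$. Dividing by $|\wt Y_{k_0}|$ and multiplying by $|Y_{k_0}|$ leaves the factor $|\wt Y_{k_0}|^{p_n\al_n-1}|Y_{k_0}|$ on the right; since $p_n\al_n\le 1$ the exponent $p_n\al_n-1$ is non-positive, so the inclusion $Y_{k_0}\subset \wt Y_{k_0}$ forces $|\wt Y_{k_0}|^{p_n\al_n-1}|Y_{k_0}|\le |Y_{k_0}|^{p_n\al_n}\le |Y|^{p_n\al_n}$. Combined with $\|f\|_{L^{p_n}(Y_{k_0})}^{p_n}\sim 2^{k_0 p_n}|Y_{k_0}|$, this delivers the claimed estimate at level $k_0$, and the $\log R$ prefactor is absorbed into $R^\e$. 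The only technical point requiring care is the local-constancy step, where $\wt Y_{k_0}$ is only approximately a union of unit balls once the rapidly decaying tails of the kernel adapted to $N_{R^{-1}}(S)$ are taken into account, but this is routine. The hypothesis $p_n\al_n\le 1$ is the one substantive input to the argument, used precisely at the step where we trade $|\wt Y_{k_0}|$ for $|Y_{k_0}|$; without it one would need the (in general false) inclusion $\wt Y_{k_0}\subset Y$ rather than $Y_{k_0}\subset \wt Y_{k_0}$.
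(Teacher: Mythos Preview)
Your proof is correct and rests on the same three ingredients as the paper's argument: local constancy of $f$ at unit scale, an application of Theorem~\ref{main} to a union of unit balls containing the relevant set, and the inequality $p_n\al_n\le 1$ to pass from the measure of that enlarged set back to $|Y|$. The only genuine difference is the choice of pigeonholing parameter. You pigeonhole on the dyadic magnitude $|f|\sim 2^{k_0}$, then inflate the level set $Y_{k_0}\subset Y$ to a union of unit balls $\wt Y_{k_0}$ on which $|f|$ is still of size $2^{k_0}$; the paper instead pigeonholes on the density $\lambda\sim |B_1\cap Y|/|B_1|$ of $Y$ in lattice unit balls, then replaces $Y_\lambda$ by the full union $\bigcup_{B_1\in\cB_\lambda}B_1$ and uses local constancy to gain the factor $\lambda$ against the loss $\lambda^{-p_n\al_n}$ coming from the enlarged measure. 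The paper's route avoids your preliminary truncation to $O(\log R)$ dyadic levels of $|f|$ and makes the role of local constancy slightly more transparent (it is used once, to compare $\|f\|_{L^{p_n}(Y\cap B_1)}$ with $\|f\|_{L^{p_n}(B_1)}$), but both arguments are standard and of comparable length.
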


\begin{proof}[Proof of Corollary \ref{main2} assuming Theorem \ref{main}]
Partition $B_R$ into a set of unit balls $B_R=\sqcup B_1$. For each dyadic number $\lambda\le 1$, define $\cB_\lambda=\{B_1: |B_1\cap Y|\sim \lambda |B_1|\}$ and $Y_\lambda=Y\cap (\cup_{B_1\in\cB_\lambda}B_1)$. By pigeonholing, there exists a $\lambda$ such that
    \[ \|f\|_{L^{p_n}(Y)}^{p_n}\lessapprox \|f\|_{L^{p_n}(Y_\lambda)}^{p_n}.\]
    Since $f$ is locally constant on each $B_1$, we morally have
    \[ \|f\|_{L^{p_n}(Y\cap B_1)}^{p_n}\lesssim \frac{|Y\cap B_1|}{|B_1|}\|f\|_{L^{p_n}(B_1)}^{p_n}. \]
    Summing over $B_1\in \cB_\lambda$, we get
    \[ \|f\|_{L^{p_n}(Y_\lambda)}^{p_n}\lesssim \lambda \|f\|_{L^{p_n}(\cup_{B_1\in\cB_\lambda}B_1)}^{p_n}. \]
    Applying Theorem \ref{main} with the set $\cup_{B_1\in\cB_\lambda}B_1$ and noting that $|\cup_{B_1\in\cB_\lambda}B_1|=\lambda^{-1}|Y|$, we obtain
    \[ \|f\|_{L^{p_n}(Y_\lambda)}^{p_n}\le \lambda C_\e R^{\e}  \Big(\frac{\lambda^{-1}|Y|}{R^n}\Big)^{p_n\al_n}R^{1/2}\sum_{\theta}\|f_\theta\|_{p_n}^{p_n}. \]
    Finally, we just need to note that $p_n\alpha_n\le 1$.
\end{proof}

\medskip

Let us look at two examples. Here is the first one: Let $Y=B(0,1)$ which is the unit ball centered at the origin. For each $\theta$, let $f_\theta$ be a single wave packet passing through $Y$. Then on the one hand,
\[ \|f\|_{L^{p_n}(Y)}^{p_n}\sim (\#\theta)^{p_n}=R^{\frac{n-1}{2}p_n}=R^n. \]
On the other hand,
\[\sum_{\theta}\|f_\theta\|_{p_n}^{p_n}=R^{\frac{n-1}{2}}R^{\frac{n+1}{2}}=R^n.\]
Thus, if \eqref{goal-n-unitball} is true, then we must need
\[ R^{np_n\al_n}\lessapprox R^{1/2}. \]
This implies 
\[ \al_n\le \frac{1}{2np_n}=\frac{n-1}{4n^2}. \]

The first example shows that our Theorem \ref{main} is sharp when $n=2$. Also we conjecture that for $n\ge 3$, \eqref{goal-n-unitball} should hold for $\al_n=\frac{n-1}{4n^2}$. However, it is hard to prove the exponent $\al_n=\frac{n-1}{4n^2}$, because of the similar difficulties that arise in the restriction conjecture. For the purpose of the paper, we do not expand our discussion on its relationship with restriction conjecture.

The second example is in $\R^2$ given by the exponential sum (see also \cite{Bourgain-example}). Assume $R^{1/2}$ is an integer. Let 
\begin{equation}
\label{Gauss-sum}
    f(x)=\psi_{B_R}(x)\cdot\sum_{k=1}^{R^{1/2}} e \Big(x_1\frac{k}{R^{1/2}}+x_2\frac{k^2}{R}\Big)
\end{equation}
where $\psi_{B_R}$ is a smooth bump function adapted to $B_R$, and $e(t):=e^{2\pi i t}$.
If $\theta$ is a cap centered at where $(k/R^{1/2},k^2/R)$, then $f_\theta$ has the form
\begin{equation}
    f_\theta(x)=\psi_{B_R}(x)\cdot e\Big(x_1\frac{k}{R^{1/2}}+x_2\frac{k^2}{R}\Big).
\end{equation}
Direct calculation shows
\[ \|f_\theta\|_{p_n}^{p_n}\sim R^{2}. \]
On the other hand, if $x=(\frac{l}{R^{1/2}},0)$ where $l\in [0,R^{1/2}]$ is an integer, then \[|f(x)|\sim R^{\frac{1}{2}}.\]
Actually, this is still true when $x$ varies within distance $\le 1/100$, by the uncertainty principle.  There are $\sim R^{\frac{1}{2}}$ such points in $B_R$. Let $Y$ be the union of unit balls centered at these points. Then
\[  |Y|\sim R^{\frac{1}{2}}\]
and
\[\|f\|_{L^{p_2}(Y)}^{p_2}\sim R^{\frac{1}{2}(p_2+1)}. \]
To satisfy \eqref{goal-n-unitball}, we must need
\[ \al_2\le 1/12. \]

The second example shows that when $|Y|$ is big, we may expect a bigger value of $\al_n$. In fact, if we consider level sets of the Gauss sum \eqref{Gauss-sum} other than the biggest one $\{x: |f|\sim R^{1/2}\}$, we will still get the same exponent $1/12$ to satisfy \eqref{goal-n-unitball}. Thus, in the absence of unforeseen examples, we might venture to propose the following conjecture.

\begin{conjecture}
Suppose $n=2$. If $|Y|\geq R^{1/2}$, then \eqref{goal-n-unitball} is true for $\al=1/12$.
\end{conjecture}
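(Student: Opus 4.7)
The target inequality for $n=2$ with $|Y|\ge R^{1/2}$ and $p_2=4$ reads
\begin{equation}\label{plan-goal}
\|f\|_{L^4(Y)}^4\le C_\varepsilon R^\varepsilon\Big(\frac{|Y|}{R^2}\Big)^{1/3}R^{1/2}\sum_\theta\|f_\theta\|_4^4.
\end{equation}
Compared with Theorem \ref{main}, the exponent on $|Y|/R^2$ is pushed from $p_2\alpha_2=1/4$ up to $1/3$. The two extreme cases are illuminating: at $|Y|\sim R^2$ the bound is exactly decoupling \eqref{BDpn}, while at $|Y|\sim R^{1/2}$ (the threshold of the hypothesis) it degenerates to the unweighted refined decoupling $\|f\|_{L^4(Y)}^4\le R^\varepsilon\sum_\theta\|f_\theta\|_4^4$. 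My plan would be to revisit the proof of Theorem \ref{main} keeping the full range $R^{1/2}\le|Y|\le R^2$ in play and to insert a new geometric input at the step that determines the exponent $\alpha_2$.

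After a standard wave-packet decomposition $f=\sum_T f_T$ at scale $R^{1/2}$, with $T$ running over $R^{1/2}\times R$ tubes, I would apply the usual dyadic pigeonhole normalizations: uniform amplitude of the $f_T$; a fixed multiplicity $\mu$ of tubes through each relevant unit ball of $Y$; and a fixed number $\nu$ of tubes per direction. The $L^4$ bound on $Y$ then reduces, via a Fefferman--C\'ordoba computation on the parabola, to a weighted count of coincidences of pairs of tubes over unit balls of $Y$. Splitting the pairs $(\theta_1,\theta_2)$ dyadically by the inter-cap distance $\delta\in[R^{-1/2},1]$ produces a family of tube/ball incidence counts at scales interpolating between $1$ and $R^{1/2}$, and the hypothesis $|Y|\ge R^{1/2}$ is meant to cap each of these incidence counts at the level needed to push $\alpha_2$ from $1/16$ to $1/12$, matching the sharp Gauss sum example \eqref{Gauss-sum}.

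The main obstacle, as I see it, is precisely this incidence step. Without a lower bound on $|Y|$ one is forced into the single wave-packet example given at the beginning of the section, which makes $\alpha_2=1/16$ optimal; the hypothesis $|Y|\ge R^{1/2}$ excludes that example but not the more delicate Gauss-sum-type configuration, so a pure Fourier-analytic argument is unlikely to suffice. One therefore needs an incidence input that distinguishes arithmetic structures of Gauss-sum type from less structured configurations, probably of Szemer\'edi--Trotter or Wolff $L^4$-Kakeya flavour, combined with a Bourgain--Guth broad--narrow reduction to localize to transverse bilinear pieces. Inserting such a tool into the wave-packet framework at an $R^\varepsilon$ loss, and verifying that the balance comes out exactly at $\alpha_2=1/12$ rather than some intermediate value between $1/16$ and $1/12$, is where I would expect the bulk of the work to lie.
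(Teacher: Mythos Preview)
The statement you are attempting to prove is labeled in the paper as a \emph{Conjecture}, not a theorem. The paper offers no proof whatsoever; after analyzing the Gauss-sum example \eqref{Gauss-sum} and noting that its level sets all point to the exponent $1/12$, the authors write ``in the absence of unforeseen examples, we might venture to propose the following conjecture.'' That is the entirety of the paper's treatment of this statement. There is therefore no paper proof to compare your proposal against.

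Your proposal is also not a proof: it is a sketch of a plan. You correctly identify that the single-wave-packet example forcing $\alpha_2=1/16$ lives at $|Y|\sim 1$ and is excluded by the hypothesis $|Y|\ge R^{1/2}$, and that the Gauss-sum configuration is the critical one at the threshold. But the key step --- the incidence bound that would exploit $|Y|\ge R^{1/2}$ to improve the exponent from $1/16$ to $1/12$ --- is left entirely unspecified. You gesture at Szemer\'edi--Trotter or Wolff-type $L^4$ Kakeya estimates, but do not state which inequality you would use, at what scale, or how the numerology would close at exactly $1/12$. Your final paragraph essentially concedes this (``this is where I would expect the bulk of the work to lie''). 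So what you have is a reasonable heuristic outline of where a proof might come from, not a proof, and since the paper also has no proof, the honest summary is that the conjecture remains open and your proposal does not settle it.
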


\bigskip

Next, we introduce the maximal Bochner-Riesz problem. The $n$-dimensional Bochner-Riesz operator is defined as 
\begin{equation}
    T_t^\lambda f(x)=(2\pi)^{-n}\int_{\mathbb{R}^n} \Big(1-\frac{|\xi|^2}{t^2}\Big)^\lambda_+\wh{f}(\xi)e^{ix\cdot\xi}d\xi\,,
\end{equation}
and the associated maximal operator is given by 
\begin{equation}
\label{MBR}
   T^{ \lambda}_\ast f(x)=\sup_{t>0}|T_t^\la f(x)|.
\end{equation}
The $L^p$-boundedness of the maximal Bochner-Riesz operator is closely related to the almost-everywhere convergence of Bochner-Riesz mean, that is, $\lim_{t\to\infty }T_t^\la f(x)$. In fact, for the same $\la$, Stein's maximal principle implies the following: When $1\leq p\leq2$, the almost-everywhere convergence statement, $\lim_{t\to\infty }T_t^\la f(x)\stackrel{a.e.}{=}f(x)$ for every $L^p$ function $f$, is equivalent to the $L^p$-boundedness of the maximal Bochenr-Riesz operator $T^\la_\ast$. When $p\geq2$, the almost-everywhere convergence problem is completely solved in \cite{CRV}.

Regarding the $ L^p$ behavior of $T^\la_\ast$, Tao made the following conjecture.

\begin{conjecture}[\cite{Tao-weak-type-BR}]
When $1<p<2$ and for any $\lambda > \frac{2n-1}{2p}-\frac{n}{2}$,
\begin{equation}
\label{MBR-conj}
    \big\| T^{\lambda}_\ast f\big\|_{p} \lesssim \|f\|_p\,. 
\end{equation}
\end{conjecture}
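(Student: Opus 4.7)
The plan is to use Corollary \ref{main2} as the key new input to attack Tao's conjecture, with the realistic goal of obtaining a sub-range of $p \in (1,2)$ that improves upon what is accessible by real interpolation between known endpoint bounds (rather than reaching the full conjecture). First, I would linearize the supremum by choosing a measurable $t:\R^n\to(0,\infty)$ and studying $Uf(x):=T^\lambda_{t(x)}f(x)$. A Littlewood--Paley decomposition in frequency together with a parabolic rescaling reduces the problem to estimating $Uf$ when $\wh{f}$ is Fourier-supported in $N_{R^{-1}}(S)$ for a paraboloid-type $S$; Tao's exponent $\lambda>\frac{2n-1}{2p}-\frac{n}{2}$ is calibrated precisely so that proving $\|Uf\|_p\lesssim R^\e\|f\|_p$ on a single frequency block, uniformly in $t(\cdot)$, yields the conjecture after summation in $R$.

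Next, for each dyadic $\alpha$ I would freeze the superlevel set $Y_\alpha=\{x\in B_R:|Uf(x)|\sim\alpha\}$. The kernel of $T_t^\lambda$ on the scale-$R$ block is, up to admissible errors, a unimodular phase times the standard kernel of the extension operator for $S$, so a wave-packet decomposition $f=\sum_\theta f_\theta$ represents $Uf(x)$ essentially as a sum of wave packets whose modulation depends on $t(x)$. Decomposing $Y_\alpha$ into pieces on which $t(x)$ is nearly constant reduces control of $\|Uf\|_{L^{p_n}(Y_\alpha)}$ to an estimate for $\|f\|_{L^{p_n}(\widetilde Y_\alpha)}$ on a slightly enlarged set $\widetilde Y_\alpha$, and Corollary \ref{main2} then gives
\[
\alpha^{p_n}|Y_\alpha|\lesssim R^\e\Big(\frac{|Y_\alpha|}{R^n}\Big)^{p_n\alpha_n}R^{1/2}\sum_{\theta}\|f_\theta\|_{p_n}^{p_n}.
\]
Combining this with Bernstein and Plancherel to bound $\sum_\theta\|f_\theta\|_{p_n}^{p_n}$ by a suitable power of $\|f\|_2$ produces a restricted strong-type level-set estimate at exponent $p_n$ that strictly improves the classical decoupling bound by the factor $(|Y_\alpha|/R^n)^{p_n\alpha_n}$.

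Finally, I would interpolate this new level-set bound at $p_n$ against the trivial $L^2\to L^2$ inequality (valid for any $\lambda>0$) to extract a strong-type $L^p$-estimate for $Uf$ in a range of $p$ lying strictly below what one obtains by interpolating between the classical $(p_n,p_n)$ and $(2,2)$ endpoints; the size of the improvement is exactly controlled by $\alpha_n$. The main obstacle is the wave-packet reduction in the second step: because $t(x)$ depends on $x$, the modulation $e^{it(x)|\xi|}$ mixes the wave packets in a way that must be unscrambled by partitioning $Y_\alpha$ into cells of near-constant $t$, at the cost of logarithmic factors and a careful pigeonholing to preserve tangency to $S$. A deeper obstruction is that the exponent $\alpha_n=\frac{1}{2n(3n-1)}$ falls short of the conjectural sharp value $\frac{n-1}{4n^2}$ for $n\geq 3$; bridging that gap, and thereby reaching the full range of Tao's conjecture, appears tied directly to the restriction conjecture, which the paper explicitly declines to engage.
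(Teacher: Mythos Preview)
First, note that the statement is a \emph{conjecture}: the paper does not prove it, but only establishes the partial result Theorem~\ref{main-thm} for specific exponents in $\R^2$ and $\R^3$. Your proposal recognises this, so the fair comparison is between your outline and the paper's proof of Theorem~\ref{main-thm}.

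Your scheme has a genuine gap. Applying the weighted decoupling inequality to a superlevel set $Y_\alpha$ and then interpolating against the trivial $L^2$ bound is not enough to cross the real-interpolation threshold $(n-1)(1/p-1/2)$. The paper's argument requires several additional ingredients that your outline omits. After a Calder\'on--Zygmund reduction (from \cite{Tao-weak-type-BR}) to a \emph{restricted weak-type} inequality for $S^\ast\Id_E$, the set $E$ is split by a density parameter $\beta_2$ into $E_1\sqcup E_2$, and the wave packets of $\Id_{E_1}$ are split by a magnitude parameter $\beta_1$ into $\Xi_1\sqcup\Xi_2$. The weighted decoupling estimate is applied only to the small-amplitude piece $G_1\Id_{E_1}$ (Lemma~\ref{lem1}); the point of the threshold $\beta_1$ is precisely that it allows one to pass from $\sum_T|f_T|^{p_n}$ to $\beta_1^{p_n-2}\sum_T|f_T|^2$ and then close via Plancherel. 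Your ``Bernstein and Plancherel'' step attempts this conversion without the amplitude cutoff, and there it fails to give anything useful. The large-amplitude piece $G_2\Id_{E_1}$ and the dense piece $S^\ast\Id_{E_2}$ are handled by completely different mechanisms (a Kakeya-type incidence count in Lemma~\ref{lem2}, and a local $L^2$ estimate in Lemma~\ref{lem3}); neither of these appears in your plan.

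A second omission is equally serious. The gain $(\gamma/R^n)^{p_n\alpha_n}$ from the weighted decoupling, where $\gamma\sim|F_j|$, is only useful if it can be played off against the number $|\mathcal J|$ of time slices via the constraint $|\mathcal J|\cdot\gamma\le R^n$. The paper achieves this by invoking a \emph{second}, independent bound for $\|S^\ast\Id_E\|_{p,\infty}$ coming from the known (non-maximal) Bochner--Riesz estimates (Theorems~\ref{BR-R2} and \ref{BR-R3}), which carries a factor $|\mathcal J|^{1/p}$. Optimising the two bounds over $\gamma$ is what actually produces an exponent below $(n-1)(1/p-1/2)$. Your plan to interpolate only against $L^2$ never sees $|\mathcal J|$, so the $\gamma$-gain has nothing to cancel against and is lost.
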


By real interpolation, \eqref{MBR-conj} is true when $\la> (n-1)(1/p-1/2)$. So far there are only two results (\cite{Tao-MBR}, \cite{Li-Wu-MBR}) beyond the interpolation exponent $(n-1)(1/p-1/2)$, and both of them only consider the planar case. Here we give a new result for the planar case and give a first improvement over the interpolation exponents for the three-dimensional case.

\begin{theorem}
\label{main-thm}
When $n=2$, \eqref{MBR-conj} holds for $p=10/7$, $\la=21/145$. When $n=3$, \eqref{MBR-conj} holds for $p=3/2$, $\la=107/325$.
\end{theorem}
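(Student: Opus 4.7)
The plan is to reduce Theorem \ref{main-thm} to a single-scale estimate for the maximal Bochner--Riesz operator and apply Corollary \ref{main2} on level sets of the linearized operator. A standard Littlewood--Paley decomposition, combined with a dyadic decomposition of $(1-|\xi|^2/t^2)_+^\la$ into pieces of height $R^{-\la}$ supported in the $R^{-1}$-neighborhood of $\{|\xi|=t\}$, reduces matters to proving, for each dyadic $R\ge 1$,
\[ \Big\| \sup_{t\in[1,2]} |T_t^R f| \Big\|_p \lesssim R^{(n-1)(1/p-1/2) - \delta + \e}\|f\|_p \]
for some $\delta > 0$, where $T_t^R$ has multiplier $\psi(R(|\xi|-t))$ and $\psi$ is a fixed bump. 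Summing over $R$ with weights $R^{-\la}$ then yields \eqref{MBR-conj} for any $\la > (n-1)(1/p-1/2) - \delta$, beating the real interpolation exponent.

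Next I would linearize the supremum via a measurable $t : \R^n \to [1,2]$ and decompose space into dyadic level sets $U_\al = \{x : |T^R_{t(x)} f(x)| \sim \al\}$; by pigeonhole it suffices to bound $\al^p |U_\al|$ for a single $\al$. Partitioning the $R^{-1}$-neighborhood of the unit sphere into $R^{-1/2}$-caps $\theta$ and writing $f = \sum_\theta f_\theta$, each term $T^R_{t(x)} f_\theta(x)$ is a sum of wave packets with frequency essentially in $\theta$, and the sphere is locally modeled by a truncated paraboloid so that Corollary \ref{main2} applies.

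The key step is to apply Corollary \ref{main2} to $g(x):=T^R_{t(x)}f(x)$ with $Y=U_\al$, giving
\[ \al^{p_n}|U_\al| \lesssim R^\e \Big(\frac{|U_\al|}{R^n}\Big)^{p_n\al_n} R^{1/2}\sum_\theta\|f_\theta\|_{p_n}^{p_n}. \]
Using Bernstein/orthogonality to convert the cap sum to a controlled multiple of $\|f\|_2^{p_n}$, then combining with the trivial bound $|U_\al| \le R^n$ and an $L^2$-type estimate for $T^R_t$ and interpolating, should yield $\al^p |U_\al| \lesssim R^{(n-1)(1/p-1/2) - \delta + \e}\|f\|_p^p$ with a positive $\delta$ at the specific $(p,\la)$ pairs in the theorem. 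For $n=3$ a preliminary Bourgain--Guth broad/narrow decomposition is used to reduce to the broad regime before the decoupling is invoked, with the narrow terms treated by induction on dimension.

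The main obstacle will be twofold. First, since $t$ depends on $x$, the function $g$ has no clean Fourier support, so one must work with a wave-packet decomposition of each $f_\theta$ directly and show that on $U_\al$ only the packets ``selected'' by $t(x)$ contribute, and that their aggregate still obeys a decoupling-type inequality --- effectively a refined Kakeya/wave-packet input making Corollary \ref{main2} applicable to $g$ rather than to an honest extension. Second, balancing the gain $(|U_\al|/R^n)^{p_n\al_n}$ against the $L^{p_n}$-to-$L^p$ passage and the cap-sum conversion requires a careful optimization, and this is precisely where the explicit exponents $\la = 21/145$ and $\la = 107/325$ emerge.
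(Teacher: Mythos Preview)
Your proposal has a genuine structural gap: it is missing essentially the entire Tao/Li--Wu framework that the paper relies on, and without that machinery the weighted decoupling by itself cannot produce the specific exponents $21/145$ and $107/325$.

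Concretely, the paper does \emph{not} apply Corollary \ref{main2} to the linearized operator $g(x)=T^R_{t(x)}f(x)$ on a level set. Instead, after the standard reduction (cf.\ \cite{Tao-weak-type-BR}, \cite{Li-Wu-MBR}) to a restricted weak-type estimate $\|S^*\Id_E\|_{p,\infty}\lessapprox R^{\la_0}|E|^{1/p}$ for characteristic functions, the sup over $t$ is discretized into $R^{-1}$-separated values $\{t_j\}$ with associated disjoint sets $F_j$. The weighted decoupling is then applied, for each fixed $j$, to the function $\sum_{\theta}(\vp_{\theta,j}\ast \Id_{E_1})\Id^\ast_T$, which \emph{does} have honest Fourier support in $N_{R^{-1}}(|\xi|=t_j)$, with $Y=F_j$. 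This sidesteps entirely the obstacle you flag about $g$ having no clean Fourier support; disjointness of the $F_j$ is what makes the $j$-sum harmless. The gain from decoupling is the factor $(\ga/R^n)^{p_n\al_n}$, where $\ga\sim|F_j|$.

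But this is only one of \emph{four} inputs. The paper introduces two auxiliary parameters: $\be_2$ splits $E=E_1\sqcup E_2$ according to density in maximal dyadic cubes, and $\be_1$ splits the wave packets of $\Id_{E_1}$ by magnitude. This yields three separate weak-type bounds --- the $L^{p_n}$ decoupling estimate for small wave packets (Lemma \ref{lem1}), an $L^1$ bound for large wave packets via a Kakeya-type tube-counting argument (Lemma \ref{lem2}), and a weak $L^2$ bound for $S^*\Id_{E_2}$ via the local smoothing estimate \eqref{local-l2} (Lemma \ref{lem3}). These three are balanced by optimizing $\be_1,\be_2$, and the resulting estimate still carries a positive power of $\ga$. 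That residual $\ga$ is then played off against a \emph{fourth} estimate coming from the known (non-maximal) Bochner--Riesz bounds of Carleson--Sj\"olin and \cite{Wu-BR}, which contribute a factor $|\cj|^{1/p'}$; the constraint $|\cj|\cdot\ga\le R^n$ closes the loop and produces the exact numbers $21/145$ and $107/325$.

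None of the ingredients in the second paragraph appear in your outline. Your suggested route --- decoupling on $U_\al$, Bernstein to pass to $\|f\|_2$, then interpolate with $L^2$ --- would at best recover the real-interpolation exponent $(n-1)(1/p-1/2)$; the improvement comes precisely from the density splitting and the interplay with the non-maximal Bochner--Riesz input, neither of which you invoke. Also, the broad/narrow reduction you mention for $n=3$ lives inside the proof of Theorem \ref{main}, not in its application here.
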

Theorem \ref{main-thm} improves the interpolation exponent in $\ZR^3$ slightly from $\la=1/3$ to $\la=107/325$. It also improves the result in \cite{Tao-MBR} slightly from $\la=3/20$ to $\la=21/145$.

\smallskip

Our proof of Theorem \ref{main-thm} is built on the framework developed in \cite{Li-Wu-MBR}. The new input is the weighted decoupling estimate \eqref{goal-n-unitball}, which allows us to pick up some local information of the maximal operator. To compare, the argument in \cite{Tao-MBR} essentially uses the classical decoupling estimate \eqref{BD-decoupling} when $p=4$.

\begin{remark}
\rm

Unlike the Bochner-Riesz operator $T^\la$, there is no duality for the maximal operator $T^\la_\ast$ between $p<2$ and $p>2$. Thus, methods built for $p>2$ do not work well when $p<2$. See \cite{GOW} for results related to $T^\la_\ast$ when $p>2$.

\end{remark}

\medskip

\noindent{\bf Notations:}
\begin{enumerate}
    \item[$\bullet$] $C_\e$ is a constant depending on $\e$ that may change from line to line.
    \item[$\bullet$] We use both $B^n(0,R)$ and $B_R$ to denote the ball of radius $R$ in $\ZR^n$, centered at the origin. Any point $x\in\ZR^n$ is also denoted by $x=(\bar x, x_n)$.
    \item[$\bullet$] $R$ and $K$ are all (big) numbers, with the choice that $K=R^{\e^{10}}$.
    \item[$\bullet$] We use $A\lesssim B$ to denote $A\leq CB$ for some constant $C$, and use $A\lessapprox B$ do denote that $A\leq C_\e R^\e B$ for any $\e>0$.
\end{enumerate}

\section{Proof of Theorem \ref{main}}

We begin with a standard wave packet decomposition of $f$. For each $\theta$, let $\bar\ZT_\theta$ be a collection of parallel, finitely overlapping $R^{1/2}\times\cdots\times R^{1/2}\times R$-tubes, whose direction is the normal direction of the center of $S\cap \theta$. (Later, we will use $\T_\theta$ to denote a refinement of $\bar \T_\theta$). Let $\{\Id_T^\ast\}_{T\in\bar\ZT_\theta}$ be an associated smooth partition of unity of $\ZR^n$ such that $\wh{\Id_T^\ast}$ is supported in $2\theta$, $\Id_T^\ast\sim 1$ on $T$, and $\Id_T^*$ decays rapidly outside $T$. Therefore, we can partition $f$ as
\begin{equation}
\label{wp-decomposition}
    f=\sum_\theta f_\theta=\sum_\theta\sum_{T\in\bar\ZT_\theta}f_\theta\Id_T^\ast=: \sum_\theta\sum_{T\in\bar\ZT_\theta}f_T.
\end{equation}
Each $f_T$ is called a wave packet, whose Fourier support is contained in $3\theta$. A similar decomposition can be found in \cite{Wu-BR} Section 3.

The direction of an $R^{1/2}\times\cdots\times R^{1/2}\times R$-tube $T$ is defined as the direction of its coreline. As a convention, we also called an $R^{-1/2}$-cap the direction of $T$, if the coreline of $T$ is parallel to the normal vector of some point in $S\cap \theta$.

\smallskip

\subsection{When \texorpdfstring{$n\geq3$}{}}

Let us first prove Theorem \ref{main} for $n\geq3$.

\medskip

\noindent {\bf Step 1. Wave packets and dyadic pigeonholing}

Recall the wave packet decomposition \eqref{wp-decomposition}. For any $\e>0$, let $K=R^{\e^{10}}$ so $1\ll K\ll R$. Let $\{\tau\}$ be the set of $K^{-1}$-caps that form a covering of the $K^{-2}$-neighborhood of $S$. By dyadic pigeonholing, we can find: a function $F$, which is a sum of wave packets $f_T$; a refinement of set of $R^{-1/2}$-caps, still denoted by $\Theta$; for each $\theta\in \Theta$, a set of tubes $\ZT_\theta\subset\bar\ZT_\theta$; two dyadic numbers $\mu,\si$. They satisfy the following conditions.
\begin{enumerate}
    \item $F=\sum_{\theta\in\Theta} F_\theta$.
    \item $\#\{\theta\subset \tau\}\sim\si$ for any $K^{-1}$-cap $\tau$ that contains some $\theta\in\Theta$.
    \item For each $\theta\in\Theta$, $F_\theta=\sum_{T\in\ZT_\theta}f_T$, $\|f_T\|_\infty$ are about the same for all $T\in\bigcup_\theta\ZT_\theta$, $|\ZT_\theta|\sim\mu$ uniformly in $\theta$.
    \item We have the estimate
    \begin{equation}
        \|f\|_{L^{p_n}(Y)}\lesssim (\log R)^C\|F\|_{L^{p_n}(Y)}.
    \end{equation}
\end{enumerate}
Let us assume $\|f_T\|_\infty\sim1$ without loss of generality.

Recall that $F$ is locally constant on each unit ball.
We do dyadic pigeonholing with respect to the magnitude of $|F|$ to find a dyadic number $\la$ and a set $Y_\la\subset Y$ which is a union of unit balls so that for any unit ball $B\subset Y_\la$ we have
\begin{equation}
    \|F\Id_B\|_\infty\sim \la
\end{equation}
and
\begin{equation}
    \|F\|_{L^{p_n}(Y)}\lesssim(\log R)\|F\|_{L^{p_n}(Y_\la)}.
\end{equation}
To simplify the notation, still denote $Y_\la$ by $Y$.

\bigskip

\noindent {\bf Step 2. Broad-narrow decomposition} 

For a $K^{-1}$-cap $\tau$ in the frequency space, let $F_\tau=\sum_{\theta\subset\tau}F_\theta$. For each $K^2$-ball $B_{K^2}\subset B_R$ in the physical space, define the significant set of $K^{-1}$-caps as follows 
\begin{equation}
    \cT(B_{K^2})=\{\tau: \|F_\tau\|_{L^{p_n}(B_{K^2})}\geq (10n K)^{-n}\|F\|_{L^{p_n}(Y\cap B_{K^2})}\}.
\end{equation}
Then by the triangle inequality, we have 
\begin{equation}
\label{narrow}
    \|F\|_{L^{p_n}(Y\cap B_{K^2})}\lesssim \Big\|\sum_{\tau\in \cT(B_{K^2})}F_\tau\Big\|_{L^{p_n}(B_{K^2})}.
\end{equation}

\begin{definition}
    We call a physical $K^2$-ball $B_{K^2}$ \textbf{narrow} if there is an $(n-1)$-dimensional hyperplane $\Ga$ of form $\Pi\times \R$ (where $\Pi$ is a hyperplane in $\R^{n-1}$), so that 
\begin{equation}
    \bigcup_{\tau\subset\cT(B_{K^2})}\tau\subset N_{10K^{-1}}(\Ga).
\end{equation}
Otherwise, we call $B_{K^2}$ \textbf{broad}.
\end{definition}

If $B_{K^2}$ is a broad $K^2$-ball, then we must have (see \cite{Du-Zhang} Section 3.1)
\begin{equation}
\label{broad}
    \|F\|_{L^{p_n}(Y\cap B_{K^2})}\lesssim K^{O(1)}\Big(\int_{B_{K^2}}\Big|\prod_{j=1}^nF_{\tau_j, v_j}\Big|^{\frac{p_n}{n}}\Big)^{1/p_n},
\end{equation}
for some $\tau_j\in\cT(B_{K^2})$ and some $v_j$, where $v_j=O(K^2)$ is an integer, and $F_{\tau_j, v_j}(x)=F_{\tau_j}(x+v_j)$ is a translation of $F_{\tau_j}$.

\smallskip

Finally, decompose \[Y=\Yn\bigsqcup\Yb,\] where \[\Yn=\bigcup_{B_{K^2}\text{  is narrow}}(Y\cap B_{K^2})\] and \[\Yb=\bigcup_{B_{K^2}\text{  is broad}}(Y\cap B_{K^2}).\] Hence we have the decomposition 
\begin{equation}
\label{broad-narrow}
    \|F\|_{L^{p_n}(Y)}=\|F\|_{L^{p_n}(\Yn)}+\|F\|_{L^{p_n}(\Yb)}.
\end{equation}

\bigskip

\noindent {\bf Step 3. Narrow case} 

We use induction for the narrow case. For each narrow ball $B_{K^2}$, apply the $(n-1)$-dimensional decoupling estimate \eqref{BD-decoupling} with $R$ replaced by $K^2$, $n$ replaced by $n-1$ and $p$ replaced by $p_n$ to obtain
\begin{equation}
    \|F\|_{L^{p_n}(\Yn\cap B_{K^2})}^{p_n}\le \|F\|_{L^{p_n}(B_{K^2})}^{p_n}\leq C_{\e} K^{\e^2}K^{\frac{n-2}{n-1}}\sum_{\tau}\|F_\tau\|_{L^{p_n}(B_{K^2})}^{p_n}.
\end{equation}
Summing up all the $B_{K^2}$ that is contained in $\Yn$, we get
\begin{equation}
\label{before-para}
    \|F\|_{L^{p_n}(\Yn)}^{p_n}\leq K^{\e^2}K^{\frac{n-2}{n-1}}\sum_{\tau}\|F_\tau\|_{L^{p_n}(N_{K^2}(Y))}^{p_n}.
\end{equation}

We will bound $\|F_\tau\|_{L^{p_n}(N_{K^2}(Y))}^{p_n}$ by induction and parabolic rescaling. Note that $\wh F_\tau$ is supported in $N_{R^{-1}}(S)\cap\tau$. Without loss of generality, let us assume that $S$ contains the origin and that $\tau$ is the $K^{-1}$-cap centered at the origin. Moreover, the tangent plane of $S$ at the origin is the horizontal plane $\{\xi_n=0\}$. Hence $N_{R^{-1}}(S)\cap\tau$ is contained in the set
\begin{equation}
    \{(\bar\xi,\xi_n):|\xi_n-\phi(\bar\xi)|\leq R^{-1}\text{ and }|\bar\xi|\leq K^{-1}\},
\end{equation}
where $\phi$ is a $C^2$ function whose graph is $S$ on $\{|\bar\xi|\leq K^{-1}\}$, and $\nabla\phi(0)=0$. Define $\cl$ to be the parabolic rescaling
\begin{equation}
    \cl(\bar x,x_n)=(K^{-1}\bar x, K^{-2}x_n).
\end{equation}
Then, for some other $C^2$ function whose Hessian is also positive-definite, the Fourier transform of $F_\tau\circ\cl$ is contained in the set
\begin{equation}
\label{parabolic-rescaling}
    \{(\bar\xi,\xi_n):|\xi_n-\phi'(\bar\xi)|\lesssim K^2(R)^{-1}\text{ and }|\bar\xi|\lesssim1\},
\end{equation}
which is $K^2R^{-1}$-neighborhood for some $C^2$ surface with positive second functamental form. Partition $B_R$ into parallel $R/K\times\cdots\times R/K\times R$ fat tubes with direction $\tau$, so that under the map $\mathcal L$, those fat tubes become $R/K^2$-balls. We denote these $R/K^2$-balls by $\{Q\}$. Define $Y_Q=\mathcal L(N_{K^2}(Y))\cap Q$ as the portion of the image of $N_{K^2}(Y)$ under $\cl$ that is contained in $Q$. Clearly, we have
\begin{equation}
\label{YQ-1}
    \max_Q|Y_Q|\lesssim K^{-(n+1)}|N_{K^2}(Y)|\lesssim K^{n-1}|Y|.
\end{equation}
In the last inequality, we used that $Y$ is a union of unit balls.

Thus, we can use induction to have
\begin{align}
    \|F_\tau\|_{L^{p_n}(N_{K^2}(Y))}^{p_n}&\sim K^{n+1}\sum_Q\|F_\tau\circ\cl\|_{L^{p_n}(Y_Q)}^{p_n}\\
    &\leq C_\e \Big(\frac{\max|Y_Q|}{(R/K^2)^n}\Big)^{p_n\al_n}\Big(\frac{R}{K^2}\Big)^{1/2+\e}\sum_{\theta\subset\tau}K^{n+1}\|F_\theta\circ\cl\|_{p_n}^{p_n}\\
    &\lesssim \Big(\frac{|Y|K^{3n-1}}{R^n}\Big)^{p_n\al_n}\Big(\frac{R}{K^2}\Big)^{1/2+\e}\sum_{\theta\subset\tau}\|F_\theta\|_{p_n}^{p_n}.
\end{align}
Plug this back to \eqref{before-para} so that
\begin{align}
\nonumber
    \|F\|_{L^{p_n}(\Yn)}^{p_n}&\leq C_\e K^{\e^2}K^{\frac{n-2}{n-1}}\sum_{\tau}\|F_\tau\|_{L^{p_n}(N_{K^2}(Y))}^{p_n}\\
    &\leq C_\e K^{\e^2}\sum_{\tau}K^{\frac{n-2}{n-1}}\Big(\frac{|Y|K^{3n-1}}{R^n}\Big)^{p_n\al_n}\Big(\frac{R}{K^2}\Big)^{1/2+\e}\sum_{\theta\subset\tau}\|F_\theta\|_{p_n}^{p_n}\\
    &\leq C_\e R^\e \Big(\frac{|Y|}{R^n}\Big)^{p_n\al_n}R^{1/2}\sum_{\theta\in\Theta}\|F_\theta\|_{p_n}^{p_n}, \label{narrow-final}
\end{align}
since $\al_n\leq \frac{1}{2n(3n-1)}$.

\bigskip

\noindent {\bf Step 4. Broad case}  

To estimate the broad part, we need the refined decoupling in \cite{GIOW} and an auxiliary lemma. 

\begin{theorem}[\cite{GIOW} Theorem 4.2]
\label{refined-decoupling-thm}
Let $2\leq p\leq 2(n+1)/(n-1)$. Suppose $h$ is a sum of wave packets $h=\sum_{T\in\ZW}f_T$ so that $\|f_T\|_2$ are about the same up to a constant multiple. Let $Y$ be a union of $R^{1/2}$-balls in $B_R$ such that each $R^{1/2}$-ball $Q\subset Y$ intersects to at most $M$ tubes from $T\in\ZW$. Then
\begin{equation}
\label{refined-decoupling}
    \|h\|_{L^p(Y)}\lessapprox M^{\frac{1}{2}-\frac{1}{p}}\Big(\sum_{T\in\ZW}\|f_T\|_p^p\Big)^{1/p }.
\end{equation}
\end{theorem}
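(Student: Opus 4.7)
My plan is to combine Bourgain--Demeter decoupling at an intermediate scale with the tube-ball incidence hypothesis and the $L^2$-orthogonality of wave packets. A direct application of the standard BD decoupling at scale $R$ would produce a factor of order $R^{(n-1)(p-2)/4}$, which reflects the total number of $R^{-1/2}$-caps in $N_{R^{-1}}(S)$. The refinement seeks to replace this by $M^{p/2-1}$, which reflects only the wave packets incident to each $R^{1/2}$-ball.

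First I would partition $Y$ into $R^{1/2}$-balls $\{Q\}$. On each $Q$, only the set $\ZW(Q):=\{T\in\ZW: T\cap Q\ne\emptyset\}$ of at most $M$ wave packets contributes significantly, since the tails of the remaining $f_T$'s decay rapidly off their tubes. The main local claim I would aim to establish is
\begin{equation*}
    \|h\|_{L^p(Q)}^p \lessapprox M^{p/2-1}\sum_{T\in \ZW(Q)} \|f_T\|_{L^p(Q)}^p.
\end{equation*}
Given this, the theorem follows by summing over the disjoint $Q\subset Y$, since each $\|f_T\|_p^p$ is the sum of its $L^p$ restrictions to the $R^{1/2}$-balls it meets.

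To establish the local claim, I would apply BD decoupling on $Q$ at scale $R^{1/2}$ using $R^{-1/4}$-caps $\sigma$, yielding a factor $R^{(n-1)(p-2)/8}$ and a sum over $\|h_\sigma\|_{L^p(Q)}^p$. Each piece $h_\sigma=\sum_{T:\theta_T\subset\sigma}f_T$ has Fourier support in an $R^{-1/4}$-cap, so $h_\sigma$ is locally constant at scale $R^{1/4}$. This locally constant property converts $\|h_\sigma\|_{L^p(Q)}$ into $\|h_\sigma\|_{L^2(Q)}$ up to a $|Q|^{1/p-1/2}$ factor, and the $L^2$ norm then decouples orthogonally across the wave packets with $\theta_T\subset\sigma$. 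Applying H\"older over the $\sigma$'s and using that at most $M$ wave packets in total pass through $Q$ should produce the desired exponent $M^{p/2-1}$ once one balances the $R$-power from BD against the $|Q|$-power from the $L^p$-to-$L^2$ conversion.

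The main obstacle I anticipate is the H\"older step: the $M$ tubes through $Q$ could distribute very unevenly among the $R^{-1/4}$-caps $\sigma$, which would spoil a naive count. I expect this is handled either by an additional dyadic pigeonhole on the number of wave packets in each $\sigma$, or, more robustly, by an induction on scales in which the estimate at scale $R$ is bootstrapped from the estimate at scale $R^{1/2}$ together with one application of BD and parabolic rescaling. Either route should produce the sharp $M^{1/2-1/p}$ exponent, with the $R^\varepsilon$ loss absorbed through finitely many induction steps since $\log R/\log R^{1/2}=2$.
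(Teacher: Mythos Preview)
The paper does not prove this theorem; it is quoted from \cite{GIOW} and used as a black box in the broad case of Step~4. So there is no in-paper proof to compare against, and I will comment on your sketch directly.

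Your reduction to a local claim on each $R^{1/2}$-ball $Q$,
\[
\|h\|_{L^p(Q)}^p \lessapprox M^{p/2-1}\sum_{T\in\ZW(Q)}\|f_T\|_{L^p(Q)}^p,
\]
is too strong and is in fact false. Take $n=2$, $p=6$, and let the $M$ tubes through $Q$ have \emph{consecutive} directions $\xi_j=(jR^{-1/2},j^2R^{-1})$, $1\le j\le M$, with unit-amplitude wave packets, where $M\le R^{1/4}$. On $Q$ the quadratic phase $x_2 j^2 R^{-1}$ varies by $O(1)$, so curvature is invisible and $h$ behaves like a one-variable Dirichlet kernel; a direct computation gives $\|h\|_{L^6(Q)}^6\gtrsim M^5|Q|$, whereas the right-hand side of your local claim is $M^2\cdot M\cdot|Q|=M^3|Q|$. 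The theorem itself survives this example only because its right-hand side carries the global norm $\|f_T\|_p^p\sim|T|$ rather than $\|f_T\|_{L^p(Q)}^p\sim|Q|$; the extra factor $|T|/|Q|\sim R^{1/2}$ is exactly what closes the inequality. So the estimate genuinely cannot be proved ball by ball, and neither the intermediate-scale BD step nor the $L^p\to L^2$ conversion you describe can rescue the local claim.

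Your fallback of induction on scales is the correct mechanism in \cite{GIOW}, but it is not simply a bootstrap of the local claim. The actual argument groups the $R^{-1/2}$-caps into $R^{-1/4}$-caps $\tau$, pigeonholes so that each active $\tau$ contains a comparable number of fine wave packets and each $R^{1/2}$-ball meets a comparable number of active coarse tubes, and then applies the inductive hypothesis at scale $R^{1/2}$ to the \emph{coarse} wave packets $h_\tau$ globally on $Y$. The gain $M^{1/2-1/p}$ emerges from the product of two incidence counts (coarse tubes through a ball, and fine tubes inside a coarse tube), which multiply to $M$ after pigeonholing. Your outline gestures at this but does not identify this two-level structure, and without it the induction does not close.
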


\begin{lemma}
\label{auxiliary}
Let $\tau_1,\ldots,\tau_n$ be $K^{-1}$-caps of $S$ so that $|u_1\wedge\cdots\wedge u_n|\gtrsim K^{O(1)}$ holds for $u_j$ being the normal vector of any point on $S\cap \tau_j$. For any $R^{-1/2}$-cap $\theta\in\Theta$, let $F_\theta$ be a sum of wave packets $f_T$ so that the support of $\wh f_T$ is contained in $\theta$ and denote by $F_{\tau_j}=\sum_{\theta\subset \tau_j}F_\theta$. Suppose $X$ is a union of $R^{1/2}$-balls in $B_R$ such that
\begin{equation}
\label{assumption}
    \prod_{j=1}^n\Big(\sum_{\supp(\wh f_T)\subset \tau_j}\Id_{2T}\Big)^{\frac{1}{n-1}}\Id_{X}\sim\nu^n
\end{equation}
for some constant $\nu>0$. Then, recalling $p_n=2n/(n-1)$, we have
\begin{equation}
\label{multilinear}
    \int_{X}\prod_{j=1}^n\Big|\sum_{\theta\subset\tau_j}F_{\theta}\Big|^{\frac{2}{n-1}}\lessapprox K^{O(1)}\Big(\frac{|X|}{R^n}\Big)^{\frac{1}{n}}\nu\sum_{\theta\in\Theta}\|F_\theta\|_{p_n}^{p_n}.
\end{equation}
\end{lemma}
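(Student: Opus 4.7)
My plan is to pin both sides of \eqref{multilinear} against the common quantity $\nu^n|X|$: I will bound the left-hand side from above by $\lessapprox K^{O(1)}\nu^n|X|$ using multilinear restriction at scale $R^{1/2}$, and bound the right-hand side from below by $\gtrsim K^{-O(1)}\nu^n|X|$ using multilinear Kakeya combined with AM-GM. Comparing these two will immediately yield \eqref{multilinear}.

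For the upper bound, I would partition $X$ into $R^{1/2}$-balls $\{Q\}$ and set $M_j(Q) := \#\{T : \supp \wh f_T \subset \tau_j,\ T \cap Q \neq \emptyset\}$. Local $L^2$-orthogonality of the wave packets (coming from the $R^{-1/2}$-separation of the $\theta$'s in frequency and the partition of unity $\Id_T^*$ within each $\theta$), combined with $\|f_T\|_\infty \sim 1$ and $|Q \cap T| \sim R^{n/2}$ when $T$ meets $Q$, yields $\|F_{\tau_j}\|_{L^2(Q)}^2 \sim R^{n/2} M_j(Q)$. Applying Bennett--Carbery--Tao multilinear restriction at scale $R^{1/2}$ to the transverse $K^{-1}$-caps $\tau_1,\ldots,\tau_n$ (valid because $\supp\wh F_{\tau_j} \subset N_{R^{-1}}(S) \subset N_{R^{-1/2}}(S)$ and the transversality constant is $\gtrsim K^{-O(1)}$) gives
\[
\int_Q \prod_{j=1}^n |F_{\tau_j}|^{p_n/n} \lessapprox K^{O(1)} R^{-n/(2(n-1))} \prod_{j=1}^n \|F_{\tau_j}\|_{L^2(Q)}^{p_n/n}.
\]
Substituting the $L^2$ value and collapsing $\prod_j M_j(Q)^{1/(n-1)} \sim \nu^n$ via \eqref{assumption} reduces this to $\int_Q \lessapprox K^{O(1)} R^{n/2}\nu^n$, and summing over the $\sim |X|R^{-n/2}$ balls in $X$ produces $\int_X \prod_j |F_{\tau_j}|^{p_n/n} \lessapprox K^{O(1)}\nu^n|X|$.

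For the lower bound, integrating \eqref{assumption} gives $\int_X \prod_j M_j^{1/(n-1)} \sim \nu^n|X|$, while multilinear Kakeya for the transverse families of $R^{1/2}\times\cdots\times R$-tubes bounds $\int_{B_R} \prod_j M_j^{1/(n-1)} \lessapprox K^{O(1)} R^{n/2} \prod_j |\ZW_j|^{1/(n-1)}$, where $\ZW_j := \{T : \supp\wh f_T \subset \tau_j\}$. Dividing and applying AM-GM yields $\sum_{j=1}^n |\ZW_j| \gtrsim K^{-O(1)}\nu^{n-1}|X|^{(n-1)/n}R^{-(n-1)/2}$. Combined with $\|f_T\|_{p_n}^{p_n} \sim |T| = R^{(n+1)/2}$ and the disjointness of $\{\theta \subset \tau_j\}_{j=1}^n$ as sub-families of $\Theta$, this gives $\sum_{\theta \in \Theta}\|F_\theta\|_{p_n}^{p_n} \ge R^{(n+1)/2}\sum_{j}|\ZW_j| \gtrsim K^{-O(1)}\nu^{n-1}|X|^{(n-1)/n}R$; multiplying by $(|X|/R^n)^{1/n}\nu = |X|^{1/n}R^{-1}\nu$ then yields $\gtrsim K^{-O(1)}\nu^n|X|$, matching the upper bound.

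I expect the main technical subtlety to be making local $L^2$-orthogonality rigorous on $R^{1/2}$-balls: wave packets have Schwartz decay rather than strict support outside their tubes, and after spatial localization neighboring $R^{-1/2}$-caps no longer have fully disjoint Fourier supports, so some standard but care-requiring Fourier-analytic manipulation is needed. Otherwise the numerology closing the argument is forced by scaling, and the exact match of the two bounds up to $K^{O(1)}$ is a reassuring consistency check that the scale $R^{1/2}$ is the right choice.
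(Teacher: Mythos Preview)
Your proposal is correct and uses essentially the same ingredients as the paper's proof: multilinear restriction at scale $R^{1/2}$ on each $R^{1/2}$-ball together with local $L^2$-orthogonality to reach $\int_X\prod_j|F_{\tau_j}|^{2/(n-1)}\lessapprox K^{O(1)}\nu^n|X|$, and multilinear Kakeya together with $R^{(n+1)/2}\#\{T:\supp\wh f_T\subset\tau_j\}\lesssim\sum_{\theta\subset\tau_j}\|F_\theta\|_{p_n}^{p_n}$ to control the tube counts. The only organizational difference is that the paper takes the $\tfrac{1}{n}$--$\tfrac{n-1}{n}$ weighted geometric mean of two upper bounds on the left-hand side, whereas you instead use the Kakeya step (plus AM--GM) to produce a \emph{lower} bound on the right-hand side and then compare both sides to the common pivot $\nu^n|X|$; the two arrangements are algebraically equivalent.
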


The proof of Lemma \ref{auxiliary} uses the multilinear Kakeya theorem and multilinear restriction theorem in \cite{BCT}.

\begin{theorem}
\label{multilinear-Kakeya}
Suppose $\{\cT_j\}_{j=1}^n$ is $n$ collections of $R^{1/2}\times\cdots\times R^{1/2}\times R$-tubes so that $|u_1\wedge\cdots\wedge u_n|\gtrsim K^{O(1)}$ holds for $u_j$ being the direction of any $R^{1/2}\times\cdots\times R^{1/2}\times R$-tube in $\cT_j$. Then 
\begin{equation}
    \int_{B_R}\prod_{j=1}^n\Big|\sum_{T\in\cT_j}\Id_T\Big|^{\frac{1}{n-1}}\lessapprox K^{O(1)}R^\frac{n}{2}\prod_{j=1}^n(\# \cT_j)^\frac{1}{n-1}.
\end{equation}

\end{theorem}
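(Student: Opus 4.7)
The plan is to deduce Theorem \ref{multilinear-Kakeya} directly from the classical multilinear Kakeya inequality of Bennett--Carbery--Tao \cite{BCT} (or Guth's endpoint refinement) by a routine isotropic rescaling. The two ingredients needed are: (i) that BCT holds for infinite unit-thickness tubes in $\mathbb{R}^n$ with the constant depending only polynomially on the reciprocal of the transversality parameter, and (ii) that our tubes can be pulled back to this setting by a dilation.

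First, I would change variables $x = R^{1/2} y$. Under this isotropic scaling, each $R^{1/2}\times\cdots\times R^{1/2}\times R$-tube $T \in \cT_j$ pulls back to a tube $\tilde T$ of dimensions $1\times\cdots\times 1\times R^{1/2}$ with the same long direction $u_j$, the ball $B_R$ pulls back to $B_{R^{1/2}}$, and the Jacobian contributes $R^{n/2}$:
\begin{equation*}
    \int_{B_R}\prod_{j=1}^n\Big|\sum_{T\in\cT_j}\Id_T\Big|^{\frac{1}{n-1}} dx \;=\; R^{n/2}\int_{B_{R^{1/2}}}\prod_{j=1}^n\Big|\sum_{\tilde T}\Id_{\tilde T}\Big|^{\frac{1}{n-1}} dy.
\end{equation*}
Since the integrand is nonnegative, I would then replace each $\tilde T$ by its infinite extension $\tilde T^\infty$ (a radius-$1$ infinite cylinder in direction $u_j$), which only enlarges the integrand while preserving the transversality hypothesis $|u_1 \wedge \cdots \wedge u_n| \gtrsim K^{-O(1)}$.

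Finally, I would invoke the BCT multilinear Kakeya inequality for unit-thickness infinite tubes with transversality parameter $c \sim K^{-O(1)}$, which gives
\begin{equation*}
    \int_{\mathbb{R}^n}\prod_{j=1}^n\Big|\sum_{\tilde T^\infty}\Id_{\tilde T^\infty}\Big|^{\frac{1}{n-1}} \;\lessapprox\; K^{O(1)} \prod_{j=1}^n (\#\cT_j)^{\frac{1}{n-1}},
\end{equation*}
with the $R^\e$ loss absorbed into $\lessapprox$ (or, using Guth's version, eliminated entirely). Combining the three steps yields the stated bound.

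There is no genuine obstacle in the argument; the one point that must be checked carefully is that the BCT constant depends at most polynomially on the reciprocal of the transversality parameter, so that the loss is $K^{O(1)}$ rather than something worse. This dependence is standard and is visible either from the induction-on-scales proof of BCT or from Guth's algebraic proof.
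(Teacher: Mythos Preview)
The paper does not give a proof of this theorem; it is simply quoted from \cite{BCT} as a known input. Your rescaling argument is the standard way to pass from the BCT statement for unit-width tubes to the $R^{1/2}\times\cdots\times R^{1/2}\times R$ formulation used here, and it is essentially correct.

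One small slip: in your final display you integrate $\prod_j\big(\sum \Id_{\tilde T^\infty}\big)^{1/(n-1)}$ over all of $\mathbb{R}^n$ with \emph{infinite} tubes, and that integral is infinite. You should keep the integration domain as $B_{R^{1/2}}$ (which is what you had after rescaling). The BCT/Guth multilinear Kakeya for unit-width tubes reads
\[
\int_{B_N}\prod_{j=1}^n\Big(\sum_{\tilde T\in\widetilde{\cT}_j}\Id_{\tilde T}\Big)^{\frac{1}{n-1}}\;\lessapprox\; c^{-O(1)}\prod_{j=1}^n(\#\widetilde{\cT}_j)^{\frac{1}{n-1}},
\]
with constant independent of $N$ (up to $N^\e$ in BCT, or with no loss via Guth), where $c$ is the transversality parameter; taking $N=R^{1/2}$ and $c\sim K^{-O(1)}$ and multiplying by the Jacobian $R^{n/2}$ gives exactly the stated bound. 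Your remark about the polynomial dependence on $c^{-1}$ is also correct and is the reason the loss is $K^{O(1)}$.
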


\begin{theorem}
\label{multilinear-restriction}
Let $\tau_1,\ldots,\tau_n$ be $K^{-1}$-caps in the frequency space so that $|u_1\wedge\cdots\wedge u_n|\gtrsim K^{O(1)}$ holds for $u_j$ being the normal vector of any point on $S_j:=S\cap \tau_j$. Define the extension operator for $S_j$ as $E_jf=\int e^{ix\cdot\xi} f(\xi)d\si_{S_j}(\xi)$. Then 
\begin{equation}
    \int_{B_R}\prod_{j=1}^n\Big|E_jf_j\Big|^{\frac{2}{n-1}}\lessapprox K^{O(1)}\Big(\prod_{j=1}^n\|f_j\|_2^2\Big)^\frac{1}{n-1}.
\end{equation}

\end{theorem}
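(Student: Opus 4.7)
The plan is to follow the Bennett--Carbery--Tao induction-on-scales strategy, taking the multilinear Kakeya inequality (Theorem~\ref{multilinear-Kakeya}) as the essential geometric input. Set $p=2/(n-1)$ and let $A(R)$ denote the smallest constant for which
\[
\int_{B_R}\prod_{j=1}^n |E_j f_j|^{p}\le A(R)\, K^{C_0}\prod_{j=1}^n \|f_j\|_2^{p}
\]
holds whenever the transversality assumption on $\{u_j\}$ is satisfied. My goal is to establish a recursion of the shape $A(R)\lesssim K^{O(1)} A(R^{1/2})$, which upon $O(\log\log R)$ iterations yields $A(R)\lessapprox 1$. The base case $R\le K^{100}$ is handled by Cauchy--Schwarz inside the integrand and Plancherel on each factor $\|E_j f_j\|_2^2 = \|f_j\|_2^2$.

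For the inductive step, I would perform a wave packet decomposition of each $E_j f_j$ at scale $R^{1/2}$: partition $\tau_j$ into $R^{-1/2}$-caps $\theta_j$ and decompose spatially so that $E_j f_j = \sum_{\theta_j, T_j} g_{\theta_j, T_j}$, where each $T_j$ is an $R^{1/2}\times\cdots\times R^{1/2}\times R$ tube in the direction normal to $S$ at the center of $\theta_j$. Partition $B_R$ into a grid of $R^{1/2}$-balls $\{Q\}$, and for each $Q$ let $\cT_j(Q)$ denote the set of tubes $T_j$ intersecting $Q$; up to rapidly decaying Schwartz tails, only the packets with $T_j\in\cT_j(Q)$ contribute to $E_j f_j|_Q$.

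On each $Q$, I would apply the inductive hypothesis at scale $R^{1/2}$ to the pieces of $E_j f_j$ whose tubes pass through $Q$. Because the $R^{-1/2}$-caps form $K^{-1}$-caps of a suitably parabolically rescaled surface at the smaller scale, the transversality of the normals is preserved up to factors of $K^{O(1)}$, so the hypothesis applies. This yields a local estimate of roughly the form
\[
\int_Q \prod_j |E_j f_j|^{p}\lessapprox A(R^{1/2})\,K^{O(1)}\prod_j \Big(\sum_{T_j\in\cT_j(Q)}\|g_{T_j}\|_2^2\Big)^{p/2}.
\]
Summing over $Q$, the combinatorial factor $\sum_Q \prod_j \#\cT_j(Q)^{1/(n-1)}$ is exactly what Theorem~\ref{multilinear-Kakeya} controls, and combining this with the near-$L^2$-orthogonality of distinct wave packets ($\sum_{T_j}\|g_{T_j}\|_2^2\lesssim \|f_j\|_2^2$) closes the recursion.

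The main obstacle I anticipate is the rigorous handling of the wave packet decomposition: wave packets have Schwartz tails rather than sharp cutoffs, so one must justify that contributions from packets with $T_j\cap Q=\emptyset$ are harmless and that the $L^2$-orthogonality survives after restricting to each $Q$. One also has to verify that the parabolic rescaling used to apply the inductive hypothesis at scale $R^{1/2}$ interacts cleanly with the transversality constants. These are the delicate bookkeeping points in the original Bennett--Carbery--Tao argument, but they all survive with polynomial losses in $K$, which is exactly what the $\lessapprox$ notation absorbs.
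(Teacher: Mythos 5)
This theorem is not proved in the paper at all --- it is quoted directly from Bennett--Carbery--Tao \cite{BCT} --- and your sketch is precisely the BCT induction-on-scales derivation of multilinear restriction from multilinear Kakeya, so in outline it matches the cited source. One small correction: no parabolic rescaling is needed (or wanted) in the inductive step --- the surface $S$ and the caps $\tau_j$, hence the transversality constant, stay fixed and only the spatial scale shrinks from $R$ to $R^{1/2}$ (rescaling would distort the separation of the $\tau_j$) --- while the remaining issues you flag (Schwartz tails, identifying the wave packets through a ball $Q$ with an extension operator on $Q$, and pigeonholing packet amplitudes so that Theorem \ref{multilinear-Kakeya} can be applied as a tube count) are exactly the standard BCT bookkeeping and are absorbed by the $K^{O(1)}$ and $C_\e R^\e$ losses.
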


We will use the following corollary for Theorem \ref{multilinear-restriction}, whose proof uses the uncertainty principle and can also be found in \cite{BCT}.

\begin{corollary}
\label{cor}
Let $\tau_1,\ldots,\tau_n$ be $K^{-1}$-caps in the frequency space so that $|u_1\wedge\cdots\wedge u_n|\gtrsim K^{O(1)}$ holds for $u_j$ being the normal vector of any point on $S\cap \tau_j$. Let $F_\theta$ be a sum of wave packets $f_T$ so that $T$ has direction $\theta$. Denote $F_{\tau_j}=\sum_{\theta\subset \tau_j}F_\theta$. Then for any $1\leq r\leq R$,
\begin{equation}
    \int_{B_r}\prod_{j=1}^n\Big|\sum_{\theta\subset\tau_j}F_{\theta}\Big|^{\frac{2}{n-1}}\lessapprox K^{O(1)}r^{-\frac{n}{n-1}}\Big(\prod_{j=1}^n\Big\|\sum_{\theta\subset\tau_j}F_{\theta}\Big\|_{L^2(B_{2r})}^2\Big)^\frac{1}{n-1}.
\end{equation}
\end{corollary}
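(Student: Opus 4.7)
The plan is to deduce Corollary \ref{cor} from the extension form of multilinear restriction (Theorem \ref{multilinear-restriction}) via the uncertainty principle, in the style of \cite{BCT}. The guiding intuition is that on a ball $B_r$, a function whose Fourier support lies in $N_{R^{-1}}(S\cap\tau_j)$ is indistinguishable, up to $O(1)$ losses, from one supported in the thicker $N_{r^{-1}}(S\cap\tau_j)$ (since $r\leq R$); this fatter slab morally represents a single extension-type object at scale $r$, after which Theorem \ref{multilinear-restriction} can be applied directly.

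First I would localize by replacing $F_{\tau_j}$ by $G_j := F_{\tau_j}\eta$, where $\eta$ is a Schwartz bump with $\eta \gtrsim \1_{B_r}$, rapid decay off $B_{2r}$, and $\widehat{\eta}$ supported in $B(0,r^{-1})$. Then $G_j$ has Fourier support in $N_{Cr^{-1}}(S\cap\tau_j)$, and it suffices to bound $\int_{\R^n}\prod_j|G_j|^{2/(n-1)}$. I would next parameterize the slab by $\xi = y + s\nu(y)$ with $y\in S\cap\tau_j$ and $|s|\lesssim r^{-1}$, and write $G_j(x) = \int_s E_{j,s}g_{j,s}(x)\,ds$, where $g_{j,s}(y):=\widehat{G_j}(y+s\nu(y))J(y,s)$ and $E_{j,s}g(x) := \int g(y)\,e^{ix\cdot(y+s\nu(y))}\,d\sigma(y)$ is the twisted extension. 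The key reduction is to a single untwisted extension: expanding $|E_{j,s}g_{j,s}(x)|^2$ as a double integral in $(y,y')$, performing the $s$-integration first, and applying Cauchy--Schwarz in $s$ to the inner product $\int g_{j,s}(y)\overline{g_{j,s}(y')}\,ds$ yields the pointwise bound $|G_j(x)|^2 \lesssim r^{-1}|E_j \bar g_j(x)|^2$, where $\bar g_j(y) := \bigl(\int|g_{j,s}(y)|^2\,ds\bigr)^{1/2}$ is a nonnegative function on $S\cap\tau_j$.

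Inserting this into the product and invoking Theorem \ref{multilinear-restriction} for the ordinary extensions $E_j \bar g_j$, together with the Plancherel identity $\|\bar g_j\|_{L^2(S\cap\tau_j)}^2 = \|\widehat{G_j}\|_2^2 \lesssim \|F_{\tau_j}\|_{L^2(B_{2r})}^2$, produces exactly the prefactor $r^{-n/(n-1)}$ and the desired right-hand side. The main technical point is the residual twist $e^{isx\cdot(\nu(y)-\nu(y'))}$ that survives the $s$-integration: on $B_r$ with $|s|\lesssim r^{-1}$ and $y,y' \in \tau_j$, one has $|sx\cdot(\nu(y)-\nu(y'))| \lesssim rK^{-1}\cdot r^{-1} = K^{-1}$, so this phase is $1 + O(K^{-1})$ and can be absorbed into the $K^{O(1)}$ constant; making this bookkeeping rigorous via a Fourier expansion in $s$ of the phase (with only $O(1)$ dominant terms) is where the uncertainty principle enters, and is carried out in \cite{BCT}.
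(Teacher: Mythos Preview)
The paper gives no self-contained proof of this corollary; it simply cites \cite{BCT} and notes that the argument is the uncertainty principle. Your overall strategy---localize with $\eta$ so that $G_j=F_{\tau_j}\eta$ has Fourier support in $N_{Cr^{-1}}(S\cap\tau_j)$, foliate the slab, and reduce to Theorem~\ref{multilinear-restriction}---is the standard route. The genuine gap is your pointwise claim $|G_j(x)|^2\lesssim r^{-1}|E_j\bar g_j(x)|^2$. After Cauchy--Schwarz in $s$ one has $\int_s|E_jg_{j,s}(x)|^2\,ds=\iint K(y,y')\,e^{ix\cdot(y-y')}\,d\sigma(y)\,d\sigma(y')$ with $K(y,y')=\int_sg_{j,s}(y)\overline{g_{j,s}(y')}\,ds$ (foliating in a fixed direction, so there is no twist). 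The bound $|K(y,y')|\le\bar g_j(y)\bar g_j(y')$ does \emph{not} allow you to dominate this oscillatory integral by $|E_j\bar g_j(x)|^2=\iint \bar g_j(y)\bar g_j(y')\,e^{ix\cdot(y-y')}\,d\sigma(y)\,d\sigma(y')$: a pointwise bound on the kernel cannot be passed through the phase $e^{ix\cdot(y-y')}$. For a concrete failure, let $g_{j,s}=\phi_1$ on half the $s$-interval and $g_{j,s}=\phi_2$ on the other half, with $\phi_1,\phi_2\ge0$ disjointly supported; then $\bar g_j$ is a constant multiple of $\phi_1+\phi_2$, so $E_j\bar g_j(x)=0$ at any $x$ with $E_j\phi_1(x)=-E_j\phi_2(x)$, whereas $\int_s|E_jg_{j,s}(x)|^2\,ds\sim r^{-1}|E_j\phi_1(x)|^2\neq0$ there.

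You have also located the difficulty in the wrong place. The residual twist $e^{isx\cdot(\nu(y)-\nu(y'))}$ really is $1+O(K^{-1})$ and is harmless. The genuine obstruction, once one foliates in a fixed direction $e_{n,j}$, is the \emph{main} modulation $e^{isx\cdot e_{n,j}}$: for $|x|\le r$ and $|s|\lesssim r^{-1}$ this undergoes $O(1)$ full oscillations and cannot be dropped. The correct implementation of the uncertainty principle is to Fourier-expand \emph{this} phase in $s$: since $s\mapsto e^{isx\cdot e_{n,j}}$ is band-limited on $[-Cr^{-1},Cr^{-1}]$ uniformly for $x\in B_r$, one obtains $G_j(x)=\sum_mc_m(x)\,E_jf_{j,m}(x)$ with $\sup_{x\in B_r}|c_m(x)|\lesssim_N(1+|m|)^{-N}$ and, by Parseval in $s$, $\sum_m\|f_{j,m}\|_2^2\sim r^{-1}\|G_j\|_{L^2}^2$. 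Substituting, using $|\sum_ma_m|^p\le\sum_m|a_m|^p$ for $p=\tfrac{2}{n-1}\le1$ (or Minkowski when $n=2$), applying Theorem~\ref{multilinear-restriction} to each tuple $(f_{1,m_1},\dots,f_{n,m_n})$, and summing against the rapidly decaying weights then yields the stated bound with the factor $r^{-n/(n-1)}$.
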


\medskip

\begin{proof}[Proof of Lemma \ref{auxiliary}]

Let $B\subset X$ be an arbitary $R^{1/2}$-ball. Apply Corollary \ref{cor} with $r= R^{1/2}$ so that
\begin{align}
    \int_{B}\prod_{j=1}^n\Big|\sum_{\theta\subset\tau_j}F_{\theta}\Big|^{\frac{2}{n-1}}&\lessapprox K^{O(1)}R^{-\frac{n}{2(n-1)}}\Big(\prod_{j=1}^n\Big\|\sum_{\theta\subset\tau_j}F_{\theta}\Big\|_{L^2(2B)}^2\Big)^\frac{1}{n-1}\\
    &\lessapprox K^{O(1)}R^{-\frac{n}{2(n-1)}}\prod_{j=1}^n\Big(\sum_{\theta\subset\tau_j}\|F_{\theta}\|_{L^2(2B)}^2\Big)^\frac{1}{n-1},
\end{align}
where the last equality follows from the $L^2$-orthogonality on the frequency side. Note that $F_\theta$ is a sum of wave packets $f_T$, where $\|f_T\|_\infty \lesssim 1$, and the direction of the $R^{1/2}\times\cdots\times R^{1/2}\times R$-tube $T$ is $\theta$. Therefore, apply H\"older's inequality on $B$ so that
\begin{align}
    \int_{B}\prod_{j=1}^n\Big|\sum_{\theta\subset\tau_j}F_{\theta}\Big|^{\frac{2}{n-1}}&\lessapprox K^{O(1)}R^{-\frac{n}{2(n-1)}}\prod_{j=1}^n\Big(\int_{2B}\sum_{\supp(\wh f_T)\subset \tau_j}\Id_T\Big)^{\frac{1}{n-1}}\\
    &\lessapprox K^{O(1)}\int_{2B}\prod_{j=1}^n\Big(\sum_{\supp(\wh f_T)\subset \tau_j}\Id_T\Big)^{\frac{1}{n-1}}.
\end{align}
Summing up all $B\subset X$ and noting that $T\cap 2B\not=\varnothing$ is a necessary condition for $2T\cap B\not=\varnothing$, we finally get
\begin{equation}
    \int_{X}\prod_{j=1}^n\Big|\sum_{\theta\subset\tau_j}F_{\theta}\Big|^{\frac{2}{n-1}}\lessapprox K^{O(1)}\int_{X}\prod_{j=1}^n\Big(\sum_{\supp(\wh f_T)\subset \tau_j}\Id_{2T}\Big)^{\frac{1}{n-1}}.
\end{equation}

We will bound the right-hand side of the above estimate by Theorem \ref{multilinear-Kakeya}. On the one hand, the assumption
\eqref{assumption} yields, 
\begin{equation}
\label{one-hand}
    \int_{X}\prod_{j=1}^n\Big|\sum_{\theta\subset\tau_j}F_{\theta}\Big|^{\frac{2}{n-1}}\lessapprox\nu^n |X|.
\end{equation}
On the other hand, Theorem \ref{multilinear-Kakeya} gives
\begin{align}
    \int_{X}\prod_{j=1}^n\Big(\sum_{\supp(\wh f_T)\subset \tau_j}\Id_{2T}\Big)^{\frac{1}{n-1}}\lessapprox K^{O(1)}R^\frac{n}{2}\prod_{j=1}^n\#\{f_T: \supp(\wh f_T)\subset \tau_j\}^\frac{1}{n-1}.
\end{align}
Notice that $R^\frac{n+1}{2}\#\{f_T: \supp(\wh f_T)\subset \tau_j\}\lesssim \sum_{\theta\subset \tau_j}\|F_\theta\|_{p_n}^{p_n}$. Hence
\begin{equation}
\label{other-hand}
    \int_{X}\prod_{j=1}^n\Big(\sum_{\supp(\wh f_T)\subset \tau_j}\Id_{2T}\Big)^{\frac{1}{n-1}}\lessapprox K^{O(1)}R^{\frac{n}{2}}R^{-\frac{n(n+1)}{2(n-1)}}\Big(\sum_\theta \|F_\theta\|_{p_n}^{p_n}\Big)^\frac{n}{n-1}
\end{equation}
Take the $\frac{1}{n}$-power of \eqref{one-hand} and $\frac{n-1}{n}$-power of \eqref{other-hand} we finally get
\begin{align}
    \int_{X}\prod_{j=1}^n\Big|\sum_{\theta\subset\tau_j}F_{\theta}\Big|^{\frac{2}{n-1}}&\lessapprox K^{O(1)} (\nu |X|^{1/n})R^{-1}\sum_\theta \|F_\theta\|_{p_n}^{p_n}\\
    &=K^{O(1)}\Big(\frac{|X|}{R^n}\Big)^{\frac{1}{n}}\nu\sum_{\theta\in\Theta}\|F_\theta\|_{p_n}^{p_n}
\end{align}
as desired.
\end{proof}

\medskip

Since there are $K^{O(1)}$ different $F_{\tau_j,v_j}$ and $|N_{K^2}(Y)|\lesssim K^{O(1)}|Y|$, by the triangle inequality, after summing all $B_{K^2}$ for \eqref{broad}, we can assume without loss of generality that 
\begin{equation}
\label{broad-to-multi}
    \|F\|_{L^{p_n}(\Yb)}^{p_n}\lesssim K^{O(1)}\int_{Y}\Big|\prod_{j=1}^nF_{\tau_j}\Big|^{\frac{p_n}{n}}= K^{O(1)}\int_{Y}\prod_{j=1}^n\Big|\sum_{\theta\subset\tau_j}F_{\theta}\Big|^{\frac{2}{n-1}}.
\end{equation}

Consider the $R^{1/2}$-balls $\{Q\}$ contained in $B_R$. For each such $R^{1/2}$-ball $Q$ and $j\in\{1,2,\dots,n\}$, denote 
\begin{equation}
    \nu_j(Q):=\Id_Q\sum_{\theta\subset\tau_j}\sum_{T\in\ZT_\theta}\Id_{2T}.
\end{equation}
$\nu_j(Q)$ measure the incidence between the ball $Q$ and $R^{1/2}\times\cdots\times R^{1/2}\times R$-tubes $T$ whose direction is contained in $\tau_j$. Via dyadic pigeonholing, we can find dyadic numbers $\nu_1, \ldots, \nu_n$, a collection of $R^{1/2}$-balls $\cq$ so that
\begin{enumerate}
    \item For each $Q\in\cq$ one has $\nu(Q)\sim\nu_j$.
    \item $\|\prod_{j=1}^n|F_{\tau_j}|^{\frac{1}{n}}\|_{L^{p_n}(Y)}\lesssim(\log R)^{O(1)}\|\prod_{j=1}^n|F_{\tau_j}|^{\frac{1}{n}}\|_{L^{p_n}\big(\cup_{Q\in\cq}(Y\cap Q)\big)}$.
\end{enumerate}
For simplicity we denote $\cup_{Q\in\cq}(Y\cap Q)$ by $Y_{\cq}$. We also define
\begin{equation}
    \nu=\Big(\prod_{j=1}^n\nu_j\Big)^{1/n}.
\end{equation}

\smallskip

Now recall that $\#\{\theta\subset \tau_j\}\sim\si$, $|\ZT_\theta|\sim\mu$,  $|f_T|\sim1$, and \eqref{broad}. Set $q_n=\frac{2(n+1)}{n-1}$. By H\"older's inequality, we have
\begin{equation}
\Big\|\prod_{j=1}^n|F_{\tau_j}|^{\frac{1}{n}}\Big\|_{L^{q_n}(Y_\cq)}\lesssim \Big(\prod_{j=1}^n\|F_{\tau_j}\|_{L^{q_n}(Y_\cq)}\Big)^{\frac{1}{n}}.
\end{equation}
Apply Theorem \ref{refined-decoupling-thm} to each $j$ so that
\begin{equation}
\Big(\prod_{j=1}^n\|F_{\tau_j}\|_{L^{q_n}(Y_\cq)}\Big)^{\frac{1}{n}}\lessapprox  \nu^{\frac{1}{n+1}}\Big(\sum_{\theta\in\Theta}\|F_\theta\|_{q_n}^{q_n}\Big)^{1/q_n}.
\end{equation}
By H\"older's inequality again on each $F_\theta$, 
\begin{equation}
    \nu^{\frac{1}{n+1}}\Big(\sum_{\theta\in\Theta}\|F_\theta\|_{q_n}^{q_n}\Big)^{1/q_n} \lessapprox  \nu^{\frac{1}{n+1}}(\si\mu R^{\frac{n+1}{2}})^{-\frac{n-1}{2n(n+1)}}\Big(\sum_{\theta\in\Theta}\|F_\theta\|_{p_n}^{p_n}\Big)^{1/p_n}.
\end{equation}
In summary, we get
\begin{equation}
\Big\|\prod_{j=1}^n|F_{\tau_j}|^{\frac{1}{n}}\Big\|_{L^{q_n}(Y_\cq)}\lessapprox  \nu^{\frac{1}{n+1}}(\si\mu R^{\frac{n+1}{2}})^{-\frac{n-1}{2n(n+1)}}\Big(\sum_{\theta\in\Theta}\|F_\theta\|_{p_n}^{p_n}\Big)^{1/p_n}.
\end{equation}

Recall \eqref{broad-to-multi}. Therefore, by H\"older's inequality, we end up with
\begin{align}
\nonumber
    &\|F\|_{L^{p_n}(\Yb)}\lessapprox\Big\|\prod_{j=1}^n|F_{\tau_j}|^{\frac{1}{n}}\Big\|_{L^{p_n}(Y_{\cq})} \lesssim |Y_{\cq}|^{\frac{n-1}{2n(n+1)}}\Big\|\prod_{j=1}^n|F_{\tau_j}|^{\frac{1}{n}}\Big\|_{L^{q_n}(Y_{\cq})}\\[1ex] \nonumber
    & \lessapprox|Y_{\cq}|^{\frac{n-1}{2n(n+1)}} \nu^{\frac{1}{n+1}}(\si\mu R^{\frac{n+1}{2}})^{-\frac{n-1}{2n(n+1)}}\Big(\sum_\theta\|F_\theta\|_{p_n}^{p_n}\Big)^{1/p_n}\\ \nonumber
    &\lesssim \big(|Y_{\cq}|^{\frac{n-1}{2n(n+1)}-\al}\nu^{\frac{1}{n+1}}(\si\mu R^{n+1})^{-\frac{n-1}{2n(n+1)}}R^{n\al}\big) \Big(\frac{|Y_{\cq}|}{R^n}\Big)^{\al}R^{\frac{n-1}{4n}}\Big(\sum_{\theta\in\Theta}\|F_\theta\|_{p_n}^{p_n}\Big)^{1/p_n}.
\end{align}
Concerning the loss, we want to show for some $\al\geq\al_n$,
\begin{equation}
\label{esti-1}
    |Y_{\cq}|^{\frac{n-1}{2n(n+1)}-\al}\nu^{\frac{1}{n+1}}(\si\mu R^{n+1})^{-\frac{n-1}{2n(n+1)}}R^{n\al}\lessapprox1.
\end{equation}

On the other hand, consider the set $X=\cup_{Q\in\cq}Q$, the union of $R^{1/2}$-balls. By theorem \ref{multilinear-Kakeya},
\begin{equation}
    \nu^{\frac{n}{n-1}}|X|\lesssim\int_{X}\prod_{j=1}^n\Big|\sum_{\theta_j\subset\tau_j}\sum_{T\in\ZT_{\theta_j}}\Id_{2T}\Big|^{\frac{1}{n-1}}\lessapprox R^\frac{n}{2}(\si\mu)^\frac{n}{n-1},
\end{equation}
which implies that
\begin{equation}
    |X|\lessapprox R^{\frac{n}{2}}(\si\mu\nu^{-1})^\frac{n}{n-1}.
\end{equation}
Via \eqref{multilinear}, one gets
\begin{align}
\nonumber
    \|F\|_{L^{p_n}(\Yb)}&\lessapprox\Big\|\prod_{j=1}^n|F_{\tau_j}|^\frac{1}{n}\Big\|_{L^{p_n}(Y_\cq)}\lessapprox\Big\|\prod_{j=1}^n|F_{\tau_j}|^{\frac{1}{n}}\Big\|_{L^{p_n}(X)}\\ \nonumber &\lessapprox(R^{-\frac{n}{2}}(\si\mu\nu^{-1})^\frac{n}{n-1})^{\frac{n-1}{2n^2}}\nu^{\frac{1}{2n}}\Big(\sum_{\theta\in\Theta}\|F_\theta\|_{p_n}^{p_n}\Big)^{1/p_n}\\ \nonumber
    &\lesssim \big(R^{-\frac{n-1}{2n}}(\si\mu)^\frac{1}{2n} R^{n\al}|Y_{\cq}|^{-\al}\big) \Big(\frac{|Y_{\cq}|}{R^n}\Big)^{\al}R^{\frac{n-1}{4n}}\Big(\sum_{\theta\in\Theta}\|F_\theta\|_{p_n}^{p_n}\Big)^{1/p_n}.
\end{align}
Concerning the loss, we want to show
\begin{equation}
\label{esti-2}
    R^{-\frac{n-1}{2n}}(\si\mu)^\frac{1}{2n} R^{n\al}|Y_{\cq}|^{-\al}\lessapprox 1.
\end{equation}

It suffices to show that either \eqref{esti-1} or \eqref{esti-2} is ture.
Now, we would like to optimize our two estimates \eqref{esti-1} and \eqref{esti-2}. Multiply the $\frac{n+1}{n-1}$-th power of \eqref{esti-1} by \eqref{esti-2} and take $\al=\frac{n-1}{4n^2}$, one gets
\begin{equation}
    \nu^\frac{1}{n-1}R^{-1/2}\lessapprox 1,
\end{equation}
which is true as $\nu\lesssim R^{\frac{n-1}{2}}$. This shows 
\begin{equation}
\label{broad-weighted}
    \|F\|_{L^{p_n}(\Yb)}\lessapprox\Big(\frac{|Y_{\cq}|}{R^n}\Big)^{\frac{n-1}{4n^2}}R^{\frac{n-1}{4n}}\Big(\sum_{\theta\in\Theta}\|F_\theta\|_{p_n}^{p_n}\Big)^{1/p_n}.
\end{equation}
Since $|Y_\cq|\leq |Y|$ and since $\frac{n-1}{4n^2}\leq \frac{1}{2n(3n-1)}$, this finishes the proof of the broad case and hence Theorem \ref{main}.

\subsection{When \texorpdfstring{$n=2$}{}}

Next, we prove Theorem \ref{main} for $n=2$. The proof is almost identical to the higher-dimensional one, except that the broad-narrow decomposition (in particular, the narrow part) is more efficient.

We follow the proof of the higher-dimensional case until Step 2, the broad-narrow decomposition. The key difference between $n=2$ and $n\ge 3$ is that we will define the broadness (or narrowness) for each point instead of each $K^2$-ball. The reason we can do it this way is because we do not need to use decoupling inequality when $n=2$. This allows us to obtain a sharp estimate when $n=2$.

For each point $x\in B_R$ in the physical space, we call it \textbf{broad} if there are more than $K^{-1}$-caps $\tau$ satisfying
\begin{equation}
    |F(x)|\leq (10K)^{-1}|F_\tau(x)|.
\end{equation}
Otherwise, we call $x$ \textbf{narrow}. 

If $x$ is narrow, then there exists a $K^{-1}$-cap $\tau_x$ such that
\begin{equation}
    |F(x)|\lesssim |F_{\tau_x}(x)|;
\end{equation}
If $x$ is broad, then there exists two caps $\tau_1,\tau_2$ (depending on $x$) with $\dist(\tau_1,\tau_2)\gtrsim K^{-1}$ such that
\begin{equation}
    |F(x)|\lesssim K^{O(1)}|F_{\tau_1}(x)|^{1/2}|F_{\tau_2}(x)|^{1/2}.
\end{equation} 

Decompose $Y$ as
\[Y=\Yn\bigsqcup\Yb,\] 
where 
\[\Yn=\{x: x \text{ is narrow}\}\]
and \[\Yb=\{x: x \text{ is broad}\}.\] 
Hence we have  
\begin{equation}
\label{broad-narrow-2}
    \|F\|_{L^{p_2}(Y)}=\|F\|_{L^{p_2}(\Yn)}+\|F\|_{L^{p_2}(\Yb)}.
\end{equation}
The treatment for the broad term $\|F\|_{L^{p_2}(\Yb)}$ is identical to the one for $n\geq3$. Recall that $\al_2=1/16$. By \eqref{broad-weighted} we have
\begin{equation}
    \|F\|_{L^{p_2}(\Yb)}^{p_2
    }\leq C_\e R^{\e}  \Big(\frac{|Y|}{R^2}\Big)^{p_2\al_2}R^{1/2}\sum_{\theta\in\Theta}\|f_\theta\|_{p_2}^{p_2}.
\end{equation}

As for the narrow term, for a fixed $\tau$, partition $B_R$ into parallel $R/K\times R$ fat tubes with direction $\tau$, so that under the map $\mathcal L$ in \eqref{parabolic-rescaling}, those fat tubes become $R/K^2$-balls. We similarly denote these $R/K^2$-balls by $\{Q\}$ and define $Y_Q=\mathcal L(Y)\cap Q$ as portion of the image of $Y$ under $\cl$ that is contained in $Q$. Clearly, we have
\begin{equation}
\label{YQ-2}
    \max_Q|Y_Q|\lesssim K^{-3}|Y|.
\end{equation}
\begin{remark}
\rm
Comparing \eqref{YQ-1} with \eqref{YQ-2}, we can see why the broad-narrow reduction is more efficient when $n=2$: We do not need to consider the $K^2$-neighborhood of $Y$, which will cause a loss of size $K^4$.
\end{remark}
Therefore, we can use induction (the version given by Corollary \ref{main2}) to have
\begin{align}
    \|F_\tau\|_{L^{p_2}(Y)}^{p_2}&\sim K^{3}\sum_Q\|F_\tau\circ\cl\|_{L^{p_2}(Y_Q)}^{p_2}\\
    &\leq C_\e \Big(\frac{\max|Y_Q|}{(R/K^2)^2}\Big)^{p_2\al_2}\Big(\frac{R}{K^2}\Big)^{1/2+\e}\sum_{\theta\subset\tau}K^{3}\|F_\theta\circ\cl\|_{p_2}^{p_2}\\
    &\lesssim \Big(\frac{|Y|K}{R^2}\Big)^{p_2\al_2}\Big(\frac{R}{K^2}\Big)^{1/2+\e}\sum_{\theta\subset\tau}\|F_\theta\|_{p_n}^{p_n}.
\end{align}
This leads to
\begin{align}
\nonumber
    \|F\|_{L^{p_2}(\Yn)}^{p_2}\lesssim \sum_{\tau}\|F_\tau\|_{L^{p_2}(Y)}^{p_2}&\leq C_\e\Big(\frac{|Y|K}{R^2}\Big)^{p_2\al_2}\Big(\frac{R}{K^2}\Big)^{1/2+\e}\sum_\tau\sum_{\theta\subset\tau}\|F_\theta\|_{p_2}^{p_2}\\ \nonumber
    &\leq C_\e R^{\e}  \Big(\frac{|Y|}{R^2}\Big)^{p_2\al_2}R^{1/2}\sum_{\theta\in\Theta}\|f_\theta\|_{p_2}^{p_2},
\end{align}
where we used $\al_2=1/16\leq 1/4$, which is better than what we want. \qed

\section{Application}

In this section, we apply Theorem \ref{main} to the maximal Bochner-Riesz problem for dimensions two and three. 

\subsection{Preliminaries}

After a Calder\'on-Zygmund type decomposition (see \cite{Tao-weak-type-BR} and \cite{Li-Wu-MBR}), to get \eqref{MBR-conj} for $\la>\la_0$, it suffices to prove the following restricted weak-type estimate
\begin{equation}
\label{weak-type}
     \|S^*\Id_E\|_{p,\infty}\lessapprox R^{\la_0}|E|^{1/p}
\end{equation}
for $E\subset B_R$ and for a maximal operator $S^\ast$ defined by the following: Let $a(x,y,t)$ be a smooth function supported on the region  $|x-y|\sim R, |x|,|y|\lesssim R$ and $t\sim1 $, where $x, y\in \ZR^n$. Define
\begin{equation}
\label{def-S}
    Sf(x,t)=R^{-\frac{n+1}{2}}\int e^{it|x-y|}f(y)a(x,y,t)dy\,,
\end{equation}
and the associated maximal operator
\begin{equation}
\label{S*}
    S^*f(x)=\sup_{t\sim 1} \big |Sf(x, t) \big| \,.
\end{equation}
We adapt all the notations in \cite{Tao-MBR} and \cite{Li-Wu-MBR}, except taking $R=\de^{-1}$.

\smallskip

The next lemma is a higher-dimensional generalization of Proposition 2.1 in \cite{Tao-MBR}.

\begin{lemma}
Let $\psi_Q$ be a bump function adapted to a square $Q\subset B_R$ with bounded $L^\infty$-norm. Then 
\begin{equation}
\label{local-l2}
    \|S(\psi_Q f)\|_{L^2_xL^\infty_t}\lesssim (|Q|^{\frac{1}{n}}/R)^{1/2}\|f\|_2.
\end{equation}
\end{lemma}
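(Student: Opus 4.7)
The plan is to control $\|S(\psi_Q f)\|_{L^2_xL^\infty_t}$ by converting the $L^\infty_t$ norm into an $L^2_t$ norm via a Bernstein-type inequality, at the cost of the square root of the $t$-Fourier support. The key observation driving everything is that as $y$ ranges over $Q$, the $t$-frequency $|x-y|$ of the oscillatory factor $e^{it|x-y|}$ varies only over an interval of length comparable to $\diam(Q) = |Q|^{1/n}$.

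Concretely, I would fix $x$ and set $h(t) := S(\psi_Q f)(x,t)$. Taking the Fourier transform in $t$ and interchanging integrations gives
$$\hat h(\tau) = R^{-(n+1)/2}\int \psi_Q(y)\,f(y)\,\tilde a\bigl(x,y,|x-y|-\tau\bigr)\,dy,$$
where $\tilde a(x,y,\cdot)$ denotes the Fourier transform of $a(x,y,\cdot)$ in the $t$ variable. Since $a$ is smooth and compactly supported in $t\sim 1$, $\tilde a$ is Schwartz in its last argument. Hence $\hat h$ is morally supported in the set $\{|x-y|:y\in Q\}+O(1)$, an interval of length $\lesssim |Q|^{1/n}+1$.

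Applying a Bernstein-type inequality then gives $\|h\|_{L^\infty_t}\lesssim |Q|^{1/(2n)}\|h\|_{L^2_t}$, and integration in $x$ yields
$$\|S(\psi_Q f)\|_{L^2_xL^\infty_t}^2 \lesssim |Q|^{1/n}\,\|S(\psi_Q f)\|_{L^2_{x,t}}^2.$$
It then remains to prove the standard spacetime $L^2$ bound $\|S(\psi_Q f)\|_{L^2_{x,t}}\lesssim R^{-1/2}\|f\|_2$. This follows from Plancherel: the operator $S(\cdot,t)$ is (up to the harmless cutoff $a$) convolution with a kernel whose Fourier multiplier is concentrated on the spherical annulus $\bigl\{||\xi|-t|\lesssim R^{-1}\bigr\}$ with $L^\infty$ norm $O(1)$; averaging over $t\sim 1$ then gains the factor $R^{-1}$ from the annular thickness. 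Equivalently, a $TT^{*}$ computation with the phase $t(|x-y|-|x-y'|)$, combined with the standard Fourier-of-surface-measure decay, gives the same bound. Chaining the two estimates produces the desired $(|Q|^{1/n}/R)^{1/2}\|f\|_2$.

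The only place where care is needed is the Bernstein step, since $\hat h$ is not exactly compactly supported: the Schwartz tails of $\tilde a$ leak it out of the interval $\{|x-y|:y\in Q\}+O(1)$. This is handled by a routine Littlewood--Paley-style decomposition of $\hat h$ into pieces supported on disjoint translates of an interval of length $|Q|^{1/n}$, applying the compact-support Bernstein inequality to each piece, and summing using the rapid decay of $\tilde a$ to render the contribution of far translates negligible. Once this is set up, the remainder of the argument is elementary.
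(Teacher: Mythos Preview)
Your argument is correct and takes a genuinely different route from the paper. The paper first localizes to a cone of directions near $e_n$, freezes the $x_n$ variable, and proves the frozen estimate $\|S(\psi_Q f)\|_{L^2_{\bar x}L^\infty_t}\lesssim |Q|^{1/(2n)}R^{-1}\|f\|_2$ by a $TT^*$ argument: the kernel $\bar K(x,t,x',t')=R^{-(n+1)}\int e^{i(t|x-y|-t'|x'-y|)}a\bar a\,\psi_Q^2(y)\,dy$ is bounded via stationary phase (using $|\nabla_y\Phi|\gtrsim R^{-1}|\bar x-\bar x'|$ and the fact that each integration by parts costs $|Q|^{-1/n}$ from differentiating $\psi_Q$), yielding $|\bar K|\lesssim R^{-(n+1)}|Q|(1+R^{-1}|\bar x-\bar x'|\,|Q|^{1/n})^{-N}$, and then Schur's test in $\bar x$ finishes. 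Integrating the frozen estimate over $x_n$ (an interval of length $\sim R$) recovers the lemma.

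Your approach instead separates the two gains cleanly: the factor $|Q|^{1/(2n)}$ comes purely from the $t$-frequency localization via Bernstein, and the factor $R^{-1/2}$ comes from a global $L^2_{x,t}$ smoothing estimate that has nothing to do with $Q$. This is more transparent and avoids both the angular decomposition and the stationary-phase kernel computation. The paper's approach, on the other hand, never needs the spacetime $L^2$ bound and keeps everything at the level of pointwise kernel estimates, which makes the dependence on the amplitude $a(x,y,t)$ completely explicit (your Plancherel step requires the routine but nonzero reduction from the $x,y$-dependent amplitude to the convolution case). One small caveat: your Bernstein step gives $(|Q|^{1/n}+1)^{1/2}$ rather than $|Q|^{1/(2n)}$, so as written it only yields the stated bound in the regime $|Q|^{1/n}\gtrsim 1$; this is the only regime used in the application, and in any case the sub-unit regime is easily handled by the trivial bound $|S(\psi_Qf)|\lesssim R^{-(n+1)/2}\|\psi_Q\|_1\|f\|_\infty$ together with Cauchy--Schwarz on $Q$.
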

\begin{proof}
By the triangle inequality, we can assume in the expression $S(\psi_Q f)=R^{-\frac{n+1}{2}}\int e^{it|x-y|}a(x,y,t)(\psi_Qf)(y)dy$ that the direction $(x-y)/|x-y|$ is within a $1/(10n)$-neighborhood of $e_n$, the vertical direction that corresponds to $x_n$.

Denote by $x=(\bar x, x_n)$. By freezing the variable $x_n$, it suffices to prove that
\begin{equation}
    \|S(\psi_Q f)\|_{L^2_{\bar x}L^\infty_t}\lesssim |Q|^{\frac{1}{2n}}R^{-1}\|f\|_2.
\end{equation}
Via the $TT^\ast$-method, this is equivalent to prove
\begin{align}
\label{TT*}
    \Big\|R^{-(n+1)}\int e^{it|x-y|-t'|x'-y|}&a(x,y,t)a(x',y,t')\psi_Q^2(y)dyF(x',t)d\bar {x'}dt\Big\|_{L^2_{\bar x}L^\infty_t}\\ \nonumber
    &\lesssim |Q|^\frac{1}{n}R^{-2}\|F\|_{L^2_{\bar x}L^1_t}.
\end{align}
The kernel $ \bar K(x,t,x',t')=R^{-(n+1)}\int e^{it|x-y|-t'|x'-y|}a(x,y,t)a(x',y,t')\psi_Q^2(y)dy$ has a very good pointwise estimate. In fact, note that $\nabla_y(t|x-y|-t'|x'-y|)\gtrsim R^{-1}|x-x'|$ whenever $t,t'\sim1$. By the method of stationary phase we have
\begin{equation}
    \bar K\leq C_N R^{-(n+1)}|Q|(1+R^{-1}|x-x'|\cdot|Q|^{1/n})^{-N},
\end{equation}
yielding $\int|\bar K|d\bar x, \int|\bar K|d\bar{x'}\lesssim R^{-2}|Q|^{1/2}$. This proves \eqref{TT*} by Schur's test.
\end{proof}

\medskip

As a function of $(x,t)$, the Fourier transform of $Sf$ is supported in $[-CR, CR]\times B^n(0,1)$. Hence $Sf$ is essentially constant on $I\times B\subset\ZR\times\ZR^n$, where $I$ is an interval of length $R^{-1}$, and $B$ is a unit ball contained in $B_R$. Therefore, after linearizing the maximal operator $S^\ast f(x)=Sf(x, t(x))$, we can find a collection of $R^{-1}$-separated points $\{t_j\}$, and for each $t_j$, a set $F_j$ that is a union of unit balls so that
\begin{equation}
    \|S^*f\|_p\lessapprox \Big\| \sum_{j} Sf(\cdot, t_j) \Id_{F_j}(\cdot) \Big\|_p.
\end{equation}
By dyadic pigeonholing, there is a dyadic number $\ga$ and a set $\cj$ such that \begin{equation}
\label{gamma}
    |F_j|\sim \ga
\end{equation}
for all $j\in\cj$, and 
\begin{equation}
\label{reduction-1}
    \|S^*f\|_p\lessapprox \Big\| \sum_{j\in\cj} Sf(\cdot, t_j) \Id_{F_j}(\cdot) \Big\|_p.
\end{equation}

For each $t_j$, the Fourier transform of the kernel $K_j(x)=R^{-\frac{n+1}{2}}e^{it_j|x|}$ is a smooth function with bounded $L^\infty$-norm that is essentially supported on the annulus $A_j=\{R^{-1}\leq|\xi|-t_j\leq R^{-1}\}$. Let $\Theta_j$ be a collection of $R^{-1/2}\times\cdots\times R^{-1/2}\times R^{-1}$-caps that forms a finitely overlapping cover of $A_j$, and let $\{\wh\vp_{\theta,j}\}_\theta$ be a set of functions so that each $\wh\vp_{\theta, j}$ is a bump function on $\theta$, $\|\wh\vp_{\theta,j}\|_\infty\lesssim R^{-\frac{n+1}{2}}$, and, up to negligible loss, we have $K_j=\sum_{\theta\in\Theta_j} \vp_{\theta,j}$. This gives a partition to our kernel $K_j(x)$.

For each $\vp_{\theta, j}$,  let $\ZT_\theta$ be a collection of parallel, finitely overlapping $R^{1/2}\times\cdots\times R^{1/2}\times R$-tubes with direction $\theta$. Let $\{\Id_T^\ast\}_{T\in\ZT_\theta}$ be an associated smooth partition of unity of $\ZR^n$ such that $\wh{\Id_T^\ast}$ is supported in $2\theta$, and $\Id_T^\ast\sim 1$ on $T$. Then, up to a negligible loss, we have the wave packet decomposition
\begin{equation}
\label{reduction-2}
    \sum_{j\in\cj} Sf(x, t_j) \Id_{F_j}(x)=\sum_{j\in\cj}\sum_{\theta\in\Theta_j}\sum_{T\in\ZT_\theta}(\vp_{\theta,j}\ast f)(x)\Id^\ast_T(x)\Id_{F_j}(x)=:Gf(x).
\end{equation}
Each $(\vp_{\theta,j}\ast f)\Id^\ast_T$ is a wave packet and is essentially constant. 

\medskip

We follow the strategy in \cite{Li-Wu-MBR} (which dates back to \cite{Tao-MBR}) to study the two operators $S^\ast $ and $G$. Introduce two parameters $\be_1,\be_2$ to be chosen later. First, $\be_2$ is used to partition the set $E$ according to its density in a maximal dyadic cube: Let $\cq$ be a collection of maximal dyadic cubes such that
\begin{equation}
    |E\cap Q|\geq\be_2|Q|^{1/n}.
\end{equation}
Denote by 
\begin{align}
    E_1&:=E\setminus \bigcup_{Q\in\cq}Q, \\
    E_2&:=E\setminus E_1.
\end{align}
Then, $\be_1$ is used to partition the set of triples $\Xi:=\{(\theta, j, T)\}$ according to the magnitude of the wave packet $|(\vp_{\theta,j}\ast \Id_{E_1})\Id^\ast_T|$:
\begin{align}
    \Xi_1&:=\{(\theta, j, T)\in\Xi: |(\vp_{\theta,j}\ast \Id_{E_1})\Id^\ast_T|<\be_1\}, \\
    \Xi_2&:=\{(\theta, j, T)\in\Xi: |(\vp_{\theta,j}\ast \Id_{E_1})\Id^\ast_T|\geq \be_1\}.
\end{align}
This partition $S^\ast\Id_E$ as $S^\ast\Id_E\leq S^\ast\Id_{E_1}+S^\ast\Id_{E_2}$, and partition $G\Id_{E_1}$ as $G\Id_{E_1}=G_1\Id_{E_1}+G_2\Id_{E_1}$, where
\begin{equation}
    G_k\Id_{E_1}=\sum_{j\in\cj}\sum_{(\theta, j, T)\in\Xi_k}(\vp_{\theta,j}\ast \Id_{E_1})\Id^\ast_T\Id_{F_j}.
\end{equation}
We will plug in $f=\Id_{E_1}$ in \eqref{reduction-1} and \eqref{reduction-2}.

\smallskip

Similar to \cite{Li-Wu-MBR}, we study the maximal operator $S^\ast$ and its counterpart $G$ in three $L^p$ spaces for $p=p_n, 1, 2$, which will be given in the next three lemmas. The first lemma is where we will invoke the weighted decoupling estimate \eqref{goal-n-unitball}.

\begin{lemma}
\label{lem1}
Recall \eqref{gamma}. We have
\begin{equation}
    \|G_1\Id_{E_1}\|_{p_n}^{p_n}\lessapprox \Big(\frac{\ga}{R^n}\Big)^{p_n\al_n}R^{1/2}\be_1^{p_n-2}|E|,
\end{equation}
where $\al_n$ is defined in Theorem \ref{main}.
\end{lemma}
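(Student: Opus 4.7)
The plan is to apply Theorem~\ref{main} to each $t_j$-slice of $G_1\Id_{E_1}$ separately and then combine the pieces using $L^2$ orthogonality across $\theta$ and $j$. The linearization underlying \eqref{reduction-1}--\eqref{reduction-2} produces pairwise disjoint sets $\{F_j\}_{j\in\cj}$, so
\[ \|G_1\Id_{E_1}\|_{p_n}^{p_n} = \sum_{j\in\cj}\|g_j\|_{L^{p_n}(F_j)}^{p_n},\qquad g_j := \sum_{(\theta,j,T)\in\Xi_1}(\vp_{\theta,j}\ast\Id_{E_1})\Id^\ast_T. \]
For each fixed $j$ the Fourier transform of $g_j$ is essentially supported in the annulus $A_j$, an $R^{-1}$-neighborhood of the sphere of radius $t_j\sim 1$. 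After a standard partition of unity breaking this sphere into finitely many coordinate patches that are graphs with positive definite Hessian (with constants uniform in $t_j\sim 1$), I will apply Theorem~\ref{main} on each patch with $Y=F_j$ (a union of unit balls of total volume $\sim\ga$) and obtain
\[ \|g_j\|_{L^{p_n}(F_j)}^{p_n}\lessapprox \Big(\frac{\ga}{R^n}\Big)^{p_n\al_n}R^{1/2}\sum_{\theta\in\Theta_j}\|(g_j)_\theta\|_{p_n}^{p_n}, \]
where $(g_j)_\theta=\sum_{T:(\theta,j,T)\in\Xi_1}(\vp_{\theta,j}\ast\Id_{E_1})\Id^\ast_T$ collects the $\Xi_1$-wave packets living in the cap $\theta$.

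The second step will bound each $\|(g_j)_\theta\|_{p_n}^{p_n}$ by pairing the small-amplitude condition defining $\Xi_1$ with an $L^\infty$--$L^2$ interpolation. Since the tubes $T\in\ZT_\theta$ are parallel and have bounded overlap, and since $\|(\vp_{\theta,j}\ast\Id_{E_1})\Id^\ast_T\|_\infty\leq\be_1$ whenever $(\theta,j,T)\in\Xi_1$,
\[ \|(g_j)_\theta\|_{p_n}^{p_n}\lesssim\sum_T\|(\vp_{\theta,j}\ast\Id_{E_1})\Id^\ast_T\|_{p_n}^{p_n}\leq\be_1^{p_n-2}\sum_T\|(\vp_{\theta,j}\ast\Id_{E_1})\Id^\ast_T\|_2^2\lesssim\be_1^{p_n-2}\|\vp_{\theta,j}\ast\Id_{E_1}\|_2^2. \]

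The final step will sum these $L^2$ masses over all $(\theta,j)$ using two layers of Plancherel orthogonality. For fixed $j$, the supports of $\wh\vp_{\theta,j}$ are essentially disjoint caps tiling $A_j$, giving $\sum_\theta\|\vp_{\theta,j}\ast\Id_{E_1}\|_2^2\lesssim\int_{A_j}|\wh\Id_{E_1}(\xi)|^2\, d\xi$; and because the $t_j$'s are $R^{-1}$-separated while each $A_j$ has radial thickness $R^{-1}$, the annuli $A_j$ are themselves essentially disjoint in $\xi$-space, so
\[ \sum_{j\in\cj}\sum_\theta\|\vp_{\theta,j}\ast\Id_{E_1}\|_2^2\lesssim\|\Id_{E_1}\|_2^2=|E_1|\leq|E|. \]
Combining the three displays produces exactly the claimed bound.

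The main difficulty will be conceptual rather than computational: tracking the normalizations in the wave packet decomposition so that both layers of orthogonality (in $\theta$ within a single $A_j$ and in $j$ across distinct annuli) collapse to $|E|$ without stray $R$-powers, and verifying that Theorem~\ref{main} applies uniformly to each spherical patch. Once these ingredients are set up, the proof is essentially a direct pairing of the new weighted decoupling estimate \eqref{goal-n-unitball} with the $L^\infty$ threshold $\be_1$ defining $\Xi_1$, in close analogy with the corresponding step in \cite{Li-Wu-MBR} where the classical (unweighted) decoupling \eqref{BDpn}—i.e.\ the $\al_n=0$ case—was used.
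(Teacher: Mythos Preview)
Your proposal is correct and follows essentially the same argument as the paper: split over the disjoint $F_j$, apply Theorem~\ref{main} with $Y=F_j$ to each slice, use the $\be_1$ threshold defining $\Xi_1$ to pass from $L^{p_n}$ to $L^2$ on each wave packet, and then collapse the $L^2$ sum over $(\theta,j,T)$ to $|E|$ by Plancherel. You have in fact been more explicit than the paper about two points the paper glosses over---the reduction of the sphere to finitely many coordinate patches so that Theorem~\ref{main} applies, and the two-layer orthogonality (caps within a fixed $A_j$, and the $R^{-1}$-separated annuli $A_j$ across $j$)---but these are exactly the justifications behind the paper's one-line ``follows from Plancherel''.
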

\begin{proof}
Since $F_j$'s are all disjoint, 
\begin{equation}
    \|G_1\Id_{E_1}\|_{p_n}^{p_n}\leq \sum_j\int_{F_j}\Big|\sum_{(\theta, j, T)\in\Xi_1}(\vp_{\theta,k}\ast \Id_{E_1})\Id^\ast_T\Big|^{p_n}.
\end{equation}
We apply Theorem \ref{main} to the right-hand side so that 
\begin{equation}
    \|G_1\Id_{E_1}\|_{p_n}^{p_n}\lessapprox\Big(\frac{\ga}{R^n}\Big)^{p_n\al_n}R^{1/2}\sum_j\sum_{\theta\in\Theta_j}\sum_{T: (\theta,j,T)\in\Xi_1}|(\vp_{\theta,k}\ast \Id_{E_1})\Id^\ast_T|^{p_n}.
\end{equation}
Since the wave packet $(\vp_{\theta,k}\ast \Id_{E_1})\Id^\ast_T$ is an element in $\Xi_1$, we thus have
\begin{align}
    \|G_1\Id_{E_1}\|_{p_n}^{p_n}&\leq \Big(\frac{\ga}{R^n}\Big)^{p_n\al_n}R^{1/2}\be_1^{p_n-2}\sum_{T: (\theta,j,T)\in\Xi_1}|(\vp_{\theta,k}\ast \Id_{E_1})\Id^\ast_T|^{2}\\ 
    &\lesssim \Big(\frac{\ga}{R^n}\Big)^{p_n\al_n}R^{1/2}\be_1^{p_n-2}\|\Id_E\|_2^2.
\end{align}
The last inequality follows from Plancherel.
\end{proof}

\begin{lemma}
\label{lem2}
We have
\begin{equation}
    \|G_2\Id_{E_1}\|_1\lessapprox \be_1^{-1}\be_2R^{-\frac{1}{2}} |E|.
\end{equation}
\end{lemma}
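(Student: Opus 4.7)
The plan is to pass from the $L^1$-bound to a squared $L^2$ quantity using the $\be_1$-lower bound built into $\Xi_2$, identify that quantity with $\|G\Id_{E_1}\|_2^2$ via wave packet orthogonality, bound the latter by $\|S\Id_{E_1}\|_{L^2_xL^\infty_t}^2$ using disjointness of the $F_j$'s, and finally control the maximal $L^2$-norm through the local $L^2$ estimate \eqref{local-l2} combined with the Calder\'on--Zygmund density bound $|E_1\cap Q|\leq\be_2|Q|^{1/n}$ on all dyadic cubes $Q$ intersecting $E_1$.

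Write $a_{\theta,j,T}:=(\vp_{\theta,j}\ast\Id_{E_1})\Id^*_T\Id_{F_j}$. By the triangle inequality and the elementary fact that $\|h\|_1\leq \be_1^{-1}\|h\|_2^2$ whenever $|h|\geq\be_1$ on its support, each wave packet in $\Xi_2$ contributes at most $\be_1^{-1}\|a_{\theta,j,T}\|_2^2$ to $\|G_2\Id_{E_1}\|_1$; here one uses that $(\vp_{\theta,j}\ast\Id_{E_1})\Id^*_T$ is essentially constant on the tube $T$ with magnitude $\geq\be_1$. Extending the sum to all of $\Xi$ and applying the bounded overlap bound $\sum_T|\Id^*_T|^2\lesssim 1$ reduces matters to controlling $\be_1^{-1}\sum_{j,\theta}\int_{F_j}|\vp_{\theta,j}\ast\Id_{E_1}|^2$. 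Using the Fourier orthogonality of the caps $\theta\subset A_j$ together with the spatial disjointness of $\{F_j\}$, one identifies this with $\|G\Id_{E_1}\|_2^2=\sum_j\int_{F_j}|S\Id_{E_1}(\cdot,t_j)|^2\,dx$, which is bounded by $\|S^*\Id_{E_1}\|_2^2\leq \|S\Id_{E_1}\|_{L^2_xL^\infty_t}^2$ via the disjointness of the $F_j$ and the definition of the maximal operator.

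It remains to show $\|S\Id_{E_1}\|_{L^2_xL^\infty_t}^2\lesssim \be_2 R^{-1/2}|E|$. For this, partition $B_R$ into dyadic cubes $\{Q_l\}$ of side length $r=\be_2 R^{1/2}$; the Calder\'on--Zygmund construction of $E_1$ ensures $|E_1\cap Q_l|\leq\be_2 r$ on every $Q_l$. Applying \eqref{local-l2} to each $\Id_{E_1\cap Q_l}$ yields $\|S\Id_{E_1\cap Q_l}\|_{L^2_xL^\infty_t}^2\lesssim (r/R)|E_1\cap Q_l|$, and a $TT^*$-style orthogonality argument mirroring the proof of \eqref{local-l2} itself allows the bounds to be summed in $l$, producing $\sum_l(r/R)|E_1\cap Q_l|=(r/R)|E|=\be_2 R^{-1/2}|E|$, as required.

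The main technical obstacles are the two orthogonality statements. In the wave-packet step, restricting from $\int_{\ZR^n}$ to $\int_{F_j}$ breaks the exact Plancherel identity, so the cross terms between different caps $\theta,\theta'$ must be controlled using the smooth wave-packet structure at scale $R^{1/2}$. In the summation step for \eqref{local-l2}, the triangle inequality is far too lossy, so one must extend the $TT^*$ Schur-test behind \eqref{local-l2} to the partition of unity $\sum_l\psi_{Q_l}^2$; this should go through because the stationary-phase bound on the kernel $\bar K$ in the proof of \eqref{local-l2} depends only on the support constraints inherited from each individual $\psi_{Q_l}$.
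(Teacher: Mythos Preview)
Your approach has a genuine obstacle at the cap-orthogonality step that does not seem repairable along the lines you suggest. The caps $\theta\subset A_j$ are only $R^{-1/2}$-separated, so local $L^2$-orthogonality for the pieces $\vp_{\theta,j}\ast\Id_{E_1}$ holds after spatial localization to $R^{1/2}$-balls, not to unit balls. Since each $F_j$ is a union of \emph{unit} balls, you are forced to replace $\int_{F_j}$ by $\int_{N_{R^{1/2}}(F_j)}$ to recover the orthogonality; but the sets $N_{R^{1/2}}(F_j)$ are no longer disjoint (a single $R^{1/2}$-ball can meet up to $\sim R^{n/2}$ of the $F_j$, one for each time slice), and the passage to $\|S\Id_{E_1}\|_{L^2_xL^\infty_t}^2$ picks up this overlap as a loss. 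The alternative of dropping the $F_j$ restriction and using global Plancherel yields only $\sum_j\|S\Id_{E_1}(\cdot,t_j)\|_2^2\lesssim|E|$, missing the crucial factor $\be_2 R^{-1/2}$. The two gains you need, $\theta$-orthogonality and $F_j$-disjointness, live at incompatible spatial scales. Your summation step has an analogous issue: the factor $(|Q|^{1/n}/R)^{1/2}$ in \eqref{local-l2} comes from the $y$-localization in the $TT^*$ kernel, and $L^2_xL^\infty_t$ is not a Hilbert space, so the vanishing of the off-diagonal $S_lS_{l'}^*$ does not translate into additivity of the squared norms.

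The paper avoids this tension entirely. After the same first step it does \emph{not} re-sum in $\theta$; instead it uses the pointwise kernel bound $|(\vp_{\theta,j}\ast\Id_{E_1})\Id_T|\lesssim R^{-(n+1)/2}|E_1\cap T|$ and then sums over $j$ using $\sum_j|T\cap F_j|\lesssim|T|$ (from disjointness of the $F_j$) to reduce to the purely geometric quantity $R^{-(n+1)/2}\sum_T|E_1\cap T|^2$. This is rewritten as $R^{-(n+1)/2}\iint_{E_1\times E_1}\sum_T\Id_T(x)\Id_T(y)\,dx\,dy$ and bounded by combining the density condition $|E_1\cap B(x,2^r)|\lesssim\be_2\cdot 2^r$ at every dyadic scale with an elementary count of tubes through two points at distance $\sim 2^r$; the dyadic sum peaks at $2^r\sim R^{1/2}$ and produces exactly $\be_2 R^{-1/2}|E|$. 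No $L^2_xL^\infty_t$ estimate or $\theta$-orthogonality is invoked.
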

\begin{proof}
Note that $\vp_{\theta,j}$ is essentially supported in a $R^{1/2}\times\cdots\times R^{1/2}\times R$-tube with direction $\theta$, centered at the origin. Hence $(\vp_{\theta,j}\ast \Id_{E_1})\Id^\ast_T$ is essentially supported on $T$. By the definition of $\be_1$,
\begin{align}
    \|G_2\Id_{E_1}\|_1&\leq \int \be_1^{-1}\sum_{j\in\cj}\sum_{(\theta, j, T)\in\Xi_2}|(\vp_{\theta,j}\ast \Id_{E_1})\Id^\ast_T|^2\Id_{F_j}\\ \label{before-F_j}
    &\lessapprox\be_1^{-1}\sum_{j\in\cj}\sum_{(\theta, j, T)\in\Xi_2}|(\vp_{\theta,j}\ast \Id_{E_1})\Id_T|^2|T\cap F_j|.
\end{align}
Since $|\vp_{\theta,j}|\lesssim R^{-(n+1)/2}$, we have $|(\vp_{\theta,j}\ast \Id_{E_1})\Id_T|\lessapprox |E\cap T|$.  Hence \eqref{before-F_j} is
\begin{align}
    \lessapprox R^{-(n+1)}\be_1^{-1}\sum_{j\in\cj}\sum_{(\theta, j, T)\in\Xi_2}|E_1\cap T|^2|T\cap F_j|.
\end{align}
Since $F_j$ are disjoint subsets of $B_R$, we can sum up all $F_j$ so that
\begin{align}
    \|G_2\Id_{E_1}\|_1\lessapprox R^{-\frac{n+1}{2}}\be_1^{-1}\sum_{T}|E_1\cap T|^2.
\end{align}
The last summation can be expressed as
\begin{equation}
    R^{-\frac{n+1}{2}}\be_1^{-1}\int \Id_{E_1}(x)\int\sum_{T}\Id_T(x)\Id_T(y)\Id_{E_1}(y)dydx.
\end{equation}
Recalling the definition of $\be_2$, the inner integral can be bounded as
\begin{align}
    \int_{E_1}\sum_{T}\Id_T(x)\Id_T(y)dydx&\lesssim\sum_{\substack{r, 2^r\leq R}}\int_{|x-y|\sim 2^r}\int_{E_1}\sum_{T}\Id_T(x)\Id_T(y)dydx\\
    &\lesssim \be_2 \sum_{r}2^{r}\min\{(R/2^{-r})^{n-1}, R^{\frac{n-1}{2}}\}\\[1ex]
    &\lessapprox \be_2 R^\frac{n}{2}.
\end{align}
The second inequality follows from the observation that for fixed $x,y$ with $|x-y|\sim 2^r$, there are $\lesssim\sum_{r}2^{r}\min\{(R/2^{-r})^{n-1}, R^{\frac{n-1}{2}}\}$ $R^{1/2}\times\cdots\times R^{1/2}\times R$-tubes $T$ containing both of them.

Plugging the above estimate back, we therefore get 
\begin{equation}
    \|G_2\Id_{E_1}\|_1\lessapprox R^{-\frac{1}{2}}\be_1^{-1}\be_2|E|
\end{equation}
as desired.
\end{proof}

\begin{lemma}
\label{lem3}
For any $\alpha>0$, 
\begin{equation}
    \big | \{x\in \mathbb R^2: S^{*}\Id_{E_2}(x)\geq\alpha\} \big |\lesssim\alpha^{-2}R^{-1}\be_2^{-1}|E|^2.
\end{equation}
\end{lemma}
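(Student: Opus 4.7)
The plan is to reduce the level-set estimate to an $L^2_x L^\infty_t$ bound on $S\Id_{E_2}$ via Chebyshev's inequality, decompose $\Id_{E_2}$ over the disjoint cubes $Q\in\cq$, and apply the local $L^2$ estimate \eqref{local-l2} cube by cube. The crucial observation is that the density lower bound $|E\cap Q|\geq\be_2|Q|^{1/n}$ built into the definition of $\cq$ converts the geometric factor $|Q|^{1/n}$ from \eqref{local-l2} into an additional power of $|E\cap Q|$, making the final sum collapse to $|E|$.

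Concretely, I would first note that $S^{*}\Id_{E_2}(x)\leq\|S\Id_{E_2}(x,\cdot)\|_{L^\infty_t}$, so Chebyshev's inequality gives
\[
\bigl|\{x\in\R^2:S^{*}\Id_{E_2}(x)\geq\alpha\}\bigr|\leq \alpha^{-2}\|S\Id_{E_2}\|_{L^2_xL^\infty_t}^2.
\]
Since the maximal dyadic cubes in $\cq$ are pairwise disjoint, we may write $\Id_{E_2}=\sum_{Q\in\cq}\Id_{E\cap Q}$. For each $Q$, pick a smooth bump $\psi_Q$ adapted to $Q$ which equals $1$ on $Q$ and has $\|\psi_Q\|_\infty\lesssim 1$, so that $\Id_{E\cap Q}=\psi_Q\Id_{E\cap Q}$, and apply \eqref{local-l2}:
\[
\|S\Id_{E\cap Q}\|_{L^2_xL^\infty_t}\lesssim \bigl(|Q|^{1/n}/R\bigr)^{1/2}|E\cap Q|^{1/2}.
\]
Invoking the density property $|Q|^{1/n}\leq \be_2^{-1}|E\cap Q|$, the right-hand side is $\lesssim R^{-1/2}\be_2^{-1/2}|E\cap Q|$. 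Summing by the triangle inequality in $L^2_xL^\infty_t$,
\[
\|S\Id_{E_2}\|_{L^2_xL^\infty_t}\leq \sum_{Q\in\cq}\|S\Id_{E\cap Q}\|_{L^2_xL^\infty_t}\lesssim R^{-1/2}\be_2^{-1/2}\sum_Q|E\cap Q|\leq R^{-1/2}\be_2^{-1/2}|E|,
\]
and squaring yields the claimed bound.

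Honestly, there is no serious obstacle in this argument: once one spots that the density condition exactly absorbs the $|Q|^{1/n}$ loss in \eqref{local-l2}, the triangle inequality over the disjoint cubes is essentially lossless. Any attempt to use orthogonality would at best replace $\bigl(\sum_Q|E\cap Q|\bigr)^2$ by $\sum_Q|E\cap Q|^2$, which is not an improvement because both are bounded by $|E|^2$. The only mild subtlety is making sure the local estimate \eqref{local-l2} is applied with $f=\Id_{E\cap Q}$ (equivalently $f=\Id_E$ multiplied by a bump $\psi_Q$ adapted to $Q$), but this is routine.
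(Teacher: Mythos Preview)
Your argument is correct and essentially identical to the paper's proof: decompose $\Id_{E_2}$ over the disjoint maximal cubes $Q\in\cq$, apply the local $L^2$ estimate \eqref{local-l2} to each piece, use the density bound $|Q|^{1/n}\le\be_2^{-1}|E\cap Q|$ to absorb the geometric loss, sum by the triangle inequality, and conclude via Chebyshev. The only cosmetic difference is that the paper phrases things with $\|S^*(\cdot)\|_2$ and applies Chebyshev at the end, whereas you work with $\|S(\cdot)\|_{L^2_xL^\infty_t}$ and apply Chebyshev at the start; these are equivalent.
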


\begin{proof}
By the triangle inequality and the local $L^2$ estimate \eqref{local-l2}, we have
\begin{align}
\nonumber
    & \|S^*(\Id_{E_2})\|_2\lesssim\big\|S^*\big(\sum_{Q\in\mathcal{Q}}\Id_{E\cap Q}\big)\big\|_2\lesssim\sum_{Q\in\mathcal{Q}}\|S^*(\chi_{E\cap Q})\|_2\\ \nonumber
    &\lesssim\sum_{Q\in\mathcal{Q}}R^{-1/2}|Q|^{1/(2n)}|E\cap Q|^{1/2}\lesssim\sum_{Q\in\mathcal{Q}}R^{-1/2}\beta_2^{-1/2}|E\cap Q|=R^{-1/2}\beta_2^{-1/2}|E|.
\end{align}
The desired estimate follows from the Chebyshev's inequality. 
\end{proof}

\subsection{Two dimensions} 

We first prove Theorem \ref{main-thm} when $n=2$. To make use of the gain of $\ga$ in Lemma \ref{lem1}, we will use the known result for the Bochner-Riesz operator (set $t=1$ in $T^\la_t$).

Recall the following result about Bochner-Riesz operator in $\ZR^2$.
\begin{theorem}[\cite{Carleson-Sjolin}]
\label{BR-R2}
When $n=2$, for any $t_j$ and $4/3\leq p\leq 2$,
\begin{equation}
    \|Sf(\cdot,t_j)\|_{p}\lessapprox\|f\|_{p}.
\end{equation}
\end{theorem}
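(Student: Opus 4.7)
The plan is to establish the theorem via the classical Carleson-Sjölin strategy: prove the sharp $L^4$ estimate at the critical exponent $p=4$, interpolate against a trivial $L^2$ estimate, and then use duality to reach the dual range $4/3\le p\le 2$. Throughout, one fixes $t_j\sim 1$ and seeks bounds uniform in $t_j$ and in $R$ up to the allowed $R^\e$ loss. The $R^{-(n+1)/2}=R^{-3/2}$ normalization in the definition of $S$ is precisely the one that makes the Carleson-Sjölin $L^4$ inequality $O(R^\e)$.

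The heart of the matter is the $L^4\to L^4$ bound for $f\mapsto Sf(\cdot,t_j)$. I would first verify the Carleson-Sjölin rotational curvature hypothesis on the phase $\phi(x,y)=t_j|x-y|$: on the support of $a$, where $|x-y|\sim R$ and $t_j\sim 1$, the mixed Hessian $\partial_x\partial_y\phi$ has one degenerate direction (the radial one), but the non-degenerate ``cone condition'' on $\det(\partial_x\partial_y\phi,\partial_y\partial_x\partial_y\phi)$ is satisfied thanks to the positive curvature of the sphere. Then I would run the $TT^*$ method: write $\|Sf(\cdot,t_j)\|_4^2=\||Sf(\cdot,t_j)|^2\|_2$, expand $|Sf|^2$ as a bilinear oscillatory integral in $(y_1,y_2)$, apply Plancherel in $x$, and use the Carleson-Sjölin Jacobian bound for the stationary-phase map $x\mapsto\nabla_x\phi(x,y_1)-\nabla_x\phi(x,y_2)$ to reduce the task to a Schur-test bound on the resulting kernel in $(y_1,y_2)$.

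For the trivial $L^2$ bound, the Fourier transform of the kernel $K_j(x)=R^{-3/2}e^{it_j|x|}\chi(|x|/R)$ is (as stated in the excerpt) a smooth bounded function essentially supported on the $R^{-1}$-neighborhood of $\{|\xi|=t_j\}$, so Plancherel yields $\|K_j*f\|_2\lesssim\|f\|_2$; a routine $x$-localization transfers this to $S(\cdot,t_j)$ with its genuine amplitude $a$. Riesz-Thorin interpolation between $L^2$ and $L^4$ then covers $2\le p\le 4$, and the dual range $4/3\le p\le 2$ follows because the adjoint operator has the identical Carleson-Sjölin structure (with the roles of $x,y$ interchanged and $a$ replaced by $\bar a$), hence satisfies the same $L^p\to L^p$ bounds for $2\le p\le 4$. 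The main obstacle is the $L^4$ estimate itself, where the positive curvature of the sphere must be exploited precisely; since this is classical \cite{Carleson-Sjolin}, we may invoke it as a black box for the purposes of this paper.
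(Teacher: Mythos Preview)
The paper does not prove this theorem at all; it is simply cited as the classical Carleson--Sj\"olin result and used as a black box in the application. Your sketch of the standard Carleson--Sj\"olin argument (sharp $L^4$ via the rotational curvature condition and $TT^\ast$, trivial $L^2$ via Plancherel, interpolation and duality) is correct, and your own closing remark that one may invoke it as a black box is exactly how the paper treats it.
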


On the one hand, we apply Theorem \ref{BR-R2} to the right hand side of \eqref{reduction-1} to get
\begin{align}
\label{weak-type-1}
    \|S^*\Id_E\|_{10/7}&\lessapprox \Big(\int\Big| \sum_{j\in\cj} S\Id_E(x, t_j) \Id_{F_j}(x)\Big|^{10/7}dx\Big)^{7/10}\\
    &\lesssim\Big(\sum_{j\in\cj} \int\Big| S\Id_E(x, t_j) \Id_{F_j}(x)\Big|^{10/7}dx\Big)^{7/10}\lessapprox|\cj|^{7/10}|E|^{7/10}.
\end{align}
The second inequality follows from the fact that $\{F_j\}$ are disjoint.

On the other hand, take $n=2$ in Lemma \ref{lem1}, \ref{lem2}, \ref{lem3} to obtain 
\begin{equation}
\label{weak-type-2}
    \big| \big\{ x\in \mathbb R^2: \big| G_1\Id_{E_1} \big|\geq \alpha \big\}\big| \lesssim \al^{-4}\ga^{1/4}\beta^2_1 |E| \,,
\end{equation}
\begin{equation}
    \label{LW-1}
    \big | \big\{x\in\mathbb R^2:  \big|G_2\Id_{E_1} \big|\geq \alpha\}\big|\lessapprox \alpha^{-1}\beta_1^{-1}R^{-1/2}\beta_2 |E|,
\end{equation}
\begin{equation}
\label{LW-2}
    \big | \{x\in \mathbb R^2: S^{*}\Id_{E_2}(x)\geq\alpha\} \big |\lesssim\alpha^{-2}R^{-1}\beta_2^{-1}|E|^2.
\end{equation}
Recall \eqref{reduction-1}, \eqref{reduction-2}. Combining \eqref{weak-type-2}, \eqref{LW-1} and \eqref{LW-2} we have
\begin{align}
    |\{x\in \mathbb R^2: |S^*\Id_E(x,t)|\geq \alpha\}| \lessapprox& \alpha^{-2}R^{-1}\beta_2^{-1}|E|^2+\alpha^{-1}\beta_1^{-1}R^{-1/2}\beta_2|E|\\ \nonumber
    &+\al^{-4}\ga^{1/4}\beta^2_1 |E|\,.
\end{align}
We take $\be_1=\al|E|^{1/5}R^{-3/10}\ga^{-1/10}$, $\be_2=|E|^{3/5}R^{-2/5}\ga^{-1/20}$ so that
\begin{align}
    |\{x\in \mathbb R^2: |S^*\Id_E(x)|\geq \alpha\}| \lessapprox& \alpha^{-2}R^{-3/5}\ga^{1/20}|E|^{7/5},
\end{align}
which, by H\"older's inequality, gives
\begin{equation}
\label{weak-type-final}
    \|S^*\Id_E\|_{10/7,\infty}\lessapprox R^{1/10}\ga^{1/40}|E|^{7/10}.
\end{equation}
Combining \eqref{weak-type-1} and \eqref{weak-type-final} and using the fact that $|\cj|\cdot\ga\leq R^{2}$, we finally have
\begin{equation}
    \|S^*\Id_E\|_{10/7,\infty}\lessapprox R^{21/145}|E|^{7/10}
\end{equation}
by taking $\ga=R^{52/29}$. This proves \eqref{weak-type} with $p=10/7$ and $\la=21/145$, and hence Theorem \ref{main-thm} when $n=2$.

\subsection{Three dimensions}

Similarly, recall the following result for the Bochner-Riesz operator in $\ZR^3$:
\begin{theorem}[\cite{Wu-BR}]
\label{BR-R3}
When $n=3$, for any $t_j$ and $13/9\leq p\leq 2$,
\begin{equation}
    \|Sf(\cdot,t_j)\|_{p}\lessapprox R^{\frac{2-p}{5p}}\|f\|_{p}.
\end{equation}
\end{theorem}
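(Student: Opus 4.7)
The plan is to establish the endpoint $p=13/9$ case and then interpolate with the trivial $L^2$ bound $\|Sf(\cdot,t_j)\|_2\lesssim \|f\|_2$ (which follows from Plancherel since $\widehat{K_j}$ is essentially a smooth bump on the annulus $A_j$) to recover the linear exponent $\sigma(p)=(2-p)/(5p)$ on the full range $13/9\le p\le 2$. For the endpoint, the strategy is a broad-narrow analysis combined with multilinear restriction, refined decoupling, and induction on the radius $R$.

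First I would set up a wave-packet decomposition: fix $t_j$, normalize by rescaling, and localize the kernel as $K_j=\sum_{\theta}\varphi_{\theta,j}$ where $\theta$ runs over $R^{-1/2}$-caps of the sphere of radius $t_j$. Writing $\varphi_{\theta,j}*f=\sum_{T\in\mathbb{T}_\theta}f_T$ with each $f_T$ essentially concentrated on an $R^{1/2}\times R^{1/2}\times R$ tube, we obtain $Sf(\cdot,t_j)=\sum_\theta\sum_{T\in\mathbb{T}_\theta}f_T$. Then cover $B_R$ by $K^2$-balls with $K=R^{\varepsilon^{10}}$ and, following the broad-narrow formalism of Step~2 of the proof of Theorem~\ref{main}, classify each $K^2$-ball as narrow (all significant caps lie within a $CK^{-1}$-neighborhood of a $2$-plane) or broad (three significant caps $\tau_1,\tau_2,\tau_3$ with normals spanning $\mathbb{R}^3$ up to $K^{-O(1)}$).

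For the narrow contribution, apply two-dimensional Bourgain-Demeter decoupling at exponent $p_2$ in the plane transverse to the cluster direction, then parabolically rescale each $\tau$ back to a unit cap and invoke the inductive hypothesis on the $R/K^2$-scale problem. For the broad contribution, apply Theorem~\ref{multilinear-restriction} (or its bilinear Tao-Wolff analog for $\mathbb{R}^3$) on the triple product $\prod_{j=1}^{3}|\sum_{\theta\subset\tau_j}(\varphi_{\theta,j}*f)|^{p/3}$, and combine with refined decoupling (Theorem~\ref{refined-decoupling-thm}) together with the weighted decoupling estimate from Theorem~\ref{main}. After dyadic pigeonholing with respect to the tube-ball incidence density $\nu$ and the wave packet multiplicity, balance the broad and narrow losses; the resulting interpolation between the trivial $L^2$ bound and the broad-narrow bound at a target exponent close to $p_3=3$ produces the exponent $1/13$ at $p=13/9$.

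The main obstacle is achieving the \emph{sharp} exponent $\sigma=1/13$ rather than the weaker one coming from Tomas-Stein interpolation. This requires genuinely exploiting the three-dimensional transversality in the broad case, which in practice forces either a polynomial-partitioning induction (iterating the broad-narrow dichotomy through an intermediate scale, à la Guth's $k$-broad framework) or a careful bookkeeping of tube density to convert the refined $L^2$ orthogonality on each $\tau_j$ into a genuine $L^{p_3}$ gain. Once this gain is in hand, the exponent arithmetic needed to reach $p=13/9$ follows from optimizing the parameters $\beta_1,\beta_2$ analogous to those used in Lemmas~\ref{lem1}--\ref{lem3}.
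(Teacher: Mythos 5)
There is a genuine gap, and it is the whole theorem. First, note that the paper does not prove Theorem \ref{BR-R3} at all: it is quoted verbatim from \cite{Wu-BR} as an external input, namely the sharp local Bochner--Riesz estimate in $\R^3$ at the endpoint $p=13/9$ (dual exponent $p'=13/4$, where the critical exponent is $3(\tfrac12-\tfrac{4}{13})-\tfrac12=\tfrac1{13}$), interpolated with the trivial $L^2$ bound. Your interpolation scaffolding is indeed consistent with this reading: $\|Sf(\cdot,t_j)\|_2\lesssim\|f\|_2$ follows from Plancherel because $\widehat{K_j}$ is $O(1)$ on the annulus $A_j$, and Riesz--Thorin between loss $R^0$ at $p=2$ and loss $R^{1/13}$ at $p=13/9$ gives exactly $R^{(2-p)/(5p)}$. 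But that means the entire content of the statement is the endpoint bound with the gain $1/13$, i.e.\ the main theorem of \cite{Wu-BR}, and your proposal does not establish it. You list the standard toolbox (wave packets, broad--narrow at scale $K^2$, multilinear restriction, refined decoupling, pigeonholing in the incidence density $\nu$, induction on $R$) and then explicitly concede that the ``main obstacle'' --- converting three-dimensional transversality into the sharp exponent $1/13$ --- would require a polynomial-partitioning induction or delicate tube-density bookkeeping, without carrying out either and without any exponent arithmetic that actually produces the numbers $13/9$ and $1/13$ rather than the weaker numerology one gets from the generic broad--narrow/Tomas--Stein scheme. That unexecuted step is precisely the theorem; asserting that ``the resulting interpolation \ldots produces the exponent $1/13$'' is a claim, not a proof.

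Two further points. You propose to run Bourgain--Demeter decoupling, Theorem \ref{refined-decoupling-thm}, and Theorem \ref{main} directly at the exponent $p=13/9$; all of these inequalities live at exponents $\ge 2$, so a direct application below $L^2$ is not meaningful. The correct route is to prove the $L^{p'}$ estimate at $p'=13/4$ (where the broad--narrow, decoupling and polynomial-partitioning machinery of \cite{Wu-BR} applies) and then pass to $p=13/9$ by duality for each fixed $t_j$ --- the paper's remark that duality fails concerns only the maximal operator $T^\lambda_*$, not the linear operator $Sf(\cdot,t_j)$. Finally, invoking the weighted decoupling Theorem \ref{main} of the present paper inside a proof of Theorem \ref{BR-R3} is both unnecessary and not how the cited result is obtained; the clean resolution here is simply to cite \cite{Wu-BR} (plus Plancherel and interpolation), or else to supply the full argument of that paper, which your sketch does not do.
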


On the one hand, we apply Theorem \ref{BR-R3} to get 
\begin{equation}
\label{weak-type-1-1}
    \|S^*\Id_E\|_{3/2,\infty}\lessapprox R^{1/15}|\cj|^{2/3}|E|^{2/3}.
\end{equation}
On the other hand, take $n=3$ in Lemma \ref{lem1}, \ref{lem2}, \ref{lem3} and have 
\begin{equation}
\label{weak-type-2-1}
    \big| \big\{ x\in \mathbb R^2: \big| G_1\Id_{E_1} \big|\geq \alpha \big\}\big| \lesssim \al^{-3}\ga^{1/16}R^{5/
    16} \beta_1 |E| \,.
\end{equation}
\begin{equation}
    \label{Lw-1-1}
    \big | \big\{x\in\mathbb R^2:  \big|G_2\Id_{E_1} \big|\geq \alpha\}\big|\lessapprox \alpha^{-1}\beta_1^{-1}R^{-1/2}\beta_2 |E|
\end{equation}
\begin{equation}
\label{Lw-2-1}
    \big | \{x\in \mathbb R^2: S^{*}\Id_{E_2}(x)\geq\alpha\} \big |\lesssim\alpha^{-2}R^{-1}\beta_2^{-1}|E|^2.
\end{equation}
Recall \eqref{reduction-1}, \eqref{reduction-2}. Combining \eqref{weak-type-2-1}, \eqref{Lw-1-1} and \eqref{Lw-2-1} we get
\begin{align}
    |\{x\in \mathbb R^2: |S^*\Id_E(x)|\geq \alpha\}| \lessapprox& \alpha^{-2}R^{-1}\beta_2^{-1}|E|^2+\alpha^{-1}\beta_1^{-1}R^{-1/2}\beta_2|E|\\ \nonumber
    &+\al^{-3}\ga^{1/16}R^{5/16} \beta_1 |E|\,.
\end{align}
We take $\be_1=\al|E|^{1/3}R^{-17/24}\ga^{-1/24}$, $\be_2=|E|^{3/5}R^{-29/48}\ga^{-1/48}$ so that
\begin{align}
    |\{x\in \mathbb R^2: |S^*\Id_E(x,t)|\geq \alpha\}| \lessapprox \alpha^{-2}R^{-19/48}\ga^{1/48}|E|^{4/3},
\end{align}
which, by H\"older's inequality, gives
\begin{equation}
\label{weak-type-final-2}
    \|S^*\Id_E\|_{3/2,\infty}\lessapprox R^{29/96}\ga^{1/96}|E|^{2/3}.
\end{equation}
Combining \eqref{weak-type-1-1} and \eqref{weak-type-final} and using the fact that $|\cj|\cdot\ga\leq R^{3}$, we finally have
\begin{equation}
    \|S^*\Id_E\|_{3/2,\infty}\lessapprox R^{107/325}|E|^{2/3}
\end{equation}
by taking $\ga=R^{847/325}$. This proves \eqref{weak-type} with $p=3/2$ and $\la=107/325$, and hence Theorem \ref{main-thm} for $n=3$.

\bibliographystyle{alpha}
\bibliography{bibli}

\end{document}